\documentclass{article}
\usepackage[utf8]{inputenc}
  
\usepackage{fullpage}
  \usepackage{amsfonts,amsmath,amssymb,mathrsfs,amsthm, comment
}

\usepackage{authblk} 
\usepackage{setspace} 
\usepackage{geometry} 
 \usepackage[normalem]{ulem}
\usepackage{microtype}
\usepackage{enumitem}
\usepackage{color}
\usepackage{tikz}
\usepackage{graphicx}
\usepackage{xcolor}
\usepackage{empheq}
\usepackage{textgreek}
\usepackage{url}

\definecolor{green1}{rgb}{0.0, 0.5, 0.0}

\graphicspath{{./figure/}}

\usepackage{hyperref}

\newtheorem{definition}{Definition}[section]

\newtheorem{thm}[definition]{Theorem}

\newtheorem{lemma}[definition]{Lemma}
\newtheorem{prop}[definition]{Proposition}

 \newtheorem{assumption}{Assumption}

\theoremstyle{remark}
\newtheorem{example}[definition]{Example}
 \newtheorem{remark}[definition]{Remark}

\newcommand{\R}{\mathbb{R}}
\newcommand{\N}{\mathbb{N}}

\newcommand{\G}{{\Gamma}}

\newcommand{\cX}{\mathcal{X}}
\newcommand{\cB}{\mathcal{B}}
\newcommand{\cC}{\mathcal{C}}
\newcommand{\cJ}{\mathcal{J}}
\newcommand{\cD}{\mathcal{D}}

\newcommand{\cF}{\mathcal{F}}

\newcommand{\cH}{\mathcal{H}}
\newcommand{\cI}{\mathcal{I}}

\newcommand{\cL}{\mathcal{L}}

\newcommand{\cU}{\mathcal{U}}

\newcommand{\wt}{\widetilde}

\newcommand{\koo}{{k\to\infty}}

\newcommand{\wto}{\rightharpoonup}
\newcommand{\weaks}{\stackrel{\star}{\wto}}

\newcommand{\cA}{\mathcal{A}}

\newcommand{\ubar}{\overline{u}}
\newcommand{\loc}{{\rm loc}}

\newcommand{\dis}{\displaystyle}
\newcommand{\Wl}{W_{\rm loc}}

\newcommand{\LQR}[2]{\mathrm{n\text{-}QR}_{#1}\!\left(#2\right)}
\newcommand{\seq}{{\rm seq}}

\DeclarePairedDelimiterX{\inp}[2]{\langle}{\rangle}{#1, #2}

\newcommand{\AI}{{\cA}}

\newcounter{altassumption}[assumption]
\renewcommand{\thealtassumption}{\theassumption w}

\allowdisplaybreaks

\title{Pattern preservation in finite to infinite-horizon optimal control problems for dissipative systems}

\author[1]{Matteo Della Rossa\thanks{Corresponding author. Email: \texttt{matteo.dellarossa@polito.it}}}
\author[2]{Lorenzo Freddi\thanks{Email: \texttt{lorenzo.freddi@uniud.it}}}
\author[3]{Thiago Alves Lima\thanks{Email: \texttt{ thiago.lima@gp.ita.br}}}
\affil[1]{Department of Electronics and Telecommunications, Politecnico di Torino, Corso Duca degli Abruzzi 24, 10129 Torino, Italy}
\affil[2]{Dipartimento di Scienze Matematiche, Informatiche e Fisiche, Università di Udine, via delle Scienze 206, 33100 Udine, Italy}
\affil[3]{Systems and Energy Engineering Division, Aeronautics Institute of Technology (ITA), Fortaleza, 60415-513, CE, Brazil}

\date{} 
\begin{document}
\maketitle
 \begin{abstract}
  This paper focuses on infinite-horizon optimal control problems for dissipative systems and the relations to their finite-horizon formulations. We show that, for a large class of problems, dissipativity of the state equation, when a coercive storage function exists,  implies that  infinite-horizon optimal controls can be obtained as limits of the corresponding finite-horizon ones. This property is referred to as \emph{pattern preservation}, or \emph{pattern-preserving property.}
 Our analysis  establishes a formal link between dissipativity theory and the variational convergence framework in optimal control, thus providing a concrete and numerically tractable condition for verifying  pattern preservation. Numerical examples illustrate the effectiveness and limitations of the proposed sufficient conditions.
\end{abstract}

 \section{Introduction}
 
In optimal control theory, a fundamental dichotomy arises between finite-horizon and infinite-horizon control problems.

Informally, in the finite-horizon case, given a state dynamics (typically, a controlled ODE) and a cost functional (typically, an integral cost), one seeks an optimal control input over a compact interval of time $[0,T]$. In contrast, the infinite-horizon problem requires minimizing (or maximizing) the cost over the unbounded interval $[0,\infty)$.
Although this distinction may appear minor or merely notational at first glance, it often leads to substantially different theoretical and computational challenges. For instance, Pontryagin's Maximum Principle, possibly combined with second-order conditions, frequently provides the structure of the optimal control in finite-horizon problems. In the infinite-horizon setting, however, the terminal boundary condition for the co-state is no longer available and can only be recovered in a weaker form. A seminal reference on necessary optimality conditions for infinite-horizon control problems is~\cite{Halkin74}. More recent developments can be found in~\cite{BasCassFra18,CANNFra18,Faulwasser21}.  Conversely, Hamilton–Jacobi–Bellman (HJB) equations tend to be simpler to analyze in the infinite-horizon case. For autonomous problems, where both the dynamics and the cost are time-invariant, the value function is time-independent, and the HJB equation becomes stationary. For finite-horizon problems, instead, the HJB equations are coupled with terminal boundary conditions, and the value function generally depends explicitly on time. For a  formal introduction to optimal control theory and the mentioned results, we refer to the monographs~\cite{Vinter2010Book,Clarke13}.

It is therefore natural to investigate  the connections between these two frameworks, and specifically to understand if and when the infinite-horizon problem  can be seen as the limit of the corresponding finite-horizon problems, in a sense to be specified. A classical and technically significant example in which this idea is fruitful is the  linear-quadratic regulator (LQR) problem (see~\cite[Chapter 6]{Liberzon12}), in which a linear state equation is coupled with a quadratic positive cost. In this setting, it can be shown that not only does the infinite-horizon optimal cost coincide with the limit of finite-horizon costs as $T\to \infty$, but also that the  infinite-horizon optimal control can be obtained as the limit of finite-horizon optimal controls. This is a consequence of the convergence of (solutions to)  finite-horizon Riccati equations to the (solution to) algebraic Riccati equation in the limit. We refer to~\cite[Section 6.2]{Liberzon12} for a didactical and at the same time self-contained presentation of this argument.

Establishing, in a general framework, the convergence of finite-horizon optimal controls to the infinite-horizon solution is the focus of the recent work by~\cite{DellaFreddi25}. In such reference and also in this manuscript, this property is formalized and referred to as the
\emph{pattern-preserving property}.
The formal definition of this property is deferred to Definition~\ref{defn:PatternPreserving}. For the purposes of this introduction, it suffices to give an informal description. We say that the pattern-preserving property holds if the structural features of optimal controls observed on every finite horizon persist in the infinite-horizon limit. For instance, if for each final time $T>0$ the corresponding finite-horizon problem admits an optimal control $u_T$ with a prescribed piecewise structure (for instance, of bang–bang type), then the associated infinite-horizon problem also admits an optimal control exhibiting the same piecewise structure.

In~\cite{DellaFreddi25}, sufficient conditions implying such property are provided, relying essentially on  the equi-coercivity of the finite-horizon problems and their $\Gamma$-convergence toward the infinite-horizon problem as $T\to \infty$. For a formal introduction to $\Gamma$-convergence and its application to optimal control we refer to~\cite{DalMaso93,Braides02,ButtDalMas82, BelButFre93}. In~\cite{DellaFreddi25}, \emph{equi-coercivity} (roughly speaking, the property that sequences of control-state couples with bounded cost admit convergent subsequences) was postulated a priori. Only a few illustrative cases satisfying this property were presented, and, in general, equi-coercivity represented a condition that is difficult to verify both theoretically and numerically.

A parallel line of research has explored the relationship between \emph{dissipativity} in dynamical systems and optimal control. Dissipativity, formally introduced by Jan C. Willems in the early 1970s~\cite{Willems73,Willems1972b} and reviewed more recently in~\cite{BorMash06}, offers a unifying formulation for classical control concepts such as stability, passivity, input-to-state-stability, etc. At its core, a dissipative system is characterized by a storage function and a supply rate, providing a physically interpretable measure of how ``energy'' (in a broad sense) is stored or dissipated by a controlled ODE. This framework historically bridged frequency-domain techniques and state-space Lyapunov methods and it has become a classical topic in undergraduate (nonlinear) control programs.
In the context of optimal control, dissipativity has recently gained renewed relevance, particularly for infinite-horizon problems. Indeed, under some technical assumptions, it implies the \emph{turnpike phenomenon}, whereby optimal trajectories of long-horizon problems remain close to a steady-state optimum for the majority of the time. These behaviors can be rigorously captured using suitable inequalities on the storage function, with direct implications for model-predictive control (MPC) and computational strategies for large- or infinite-horizon problems. This relatively recent line of research has been developed in~\cite{Faulwasser21,FaulKorda17,Gruene2022,Hara2023,Trelat23, Angeli2024} and references therein.


In this paper, we establish a formal link between dissipativity and the \emph{equi-coercivity} (in the functional sense) property in optimal control problems. Specifically, we show that dissipativity together with a coercive storage function (i.e., with bounded sublevel sets), implies \emph{equi-coercivity} for a broad class of sequences of optimal control problems. This therefore provides two complementary contributions. First, a well-established and numerically friendly criterion for verifying equi-coercivity, addressing a gap that was present in \cite{DellaFreddi25}, as already pointed out. Second, when combined with a proof of $\Gamma$-convergence, this result shows that optimal control problems with dissipative state dynamics and a coercive storage function automatically satisfy the \emph{pattern-preserving property}.
This is consistent with the results presented in~\cite{ Faulwasser21,Gruene2022,Hara2023,Trelat23, Angeli2024,FaulGrune22}, where it is proved (in various context and under certain conditions) that (strict) dissipativity implies the turnpike property, i.e., that (finite- and infinite-horizon) optimal control-states couples remains essentially close to a steady-state of the dynamics. Thus, finite- and infinite-horizon controls substantially share the same structure, consistently with our notion of \emph{pattern-preservation}.
After the formal derivation of our results, we illustrate, with simple numerical examples, the strength and the limits of the proposed sufficient conditions.

The structure of this manuscript is as follows: in Section~\ref{sec:Preliminaries} we formally introduce the notion of pattern-preserving property, in a general variational formulation for optimal control problems. We then introduce the classes of problems studied in this manuscript, together with some preliminary  results. In Section~\ref{sec:ParticularCases} we present our main result, showing that dissipativity of the state-equation provides a crucial sufficient condition to establish the pattern-preserving property. In Section~\ref{Sec:Examples} we illustrate our findings with the help of elementary but insightful numerical examples. The consequences of our results in the context of non-linear and state-constrained \emph{quadratic regulator} problems are also presented.  Final comments and future perspective are sketched in the conclusive Section~\ref{sec:Conclusions}.\\
\textbf{Notation:} The symbol $\overline \R:=(-\infty,\infty]$ denotes the set of upper-extended real numbers.
By $|\cdot|$ we denote any norm in $\R^k$, $k\in\N$; when applied to matrices, it denotes the associated \emph{operator norm}. The usual norm in the Lebesgue space $L^p$ will be denoted by $\|\cdot\|_p$. Given $p\in[1,\infty]$, as usual, $p'$ denotes the conjugate exponent of $p$, defined by the equality $\frac{1}{p'}+\frac{1}{p}=1$ and the convention $1/\infty=0$.   The notation used for domain and codomain of spaces of functions is standard. Namely, $L^p(D,C)$ denotes the space of $L^p$ functions with domain $D$ and co-domain $C$; when  $C=\R$ we simply write $L^p(D)$.   Accordingly, 
by $\cD(\Omega,\R^k)$ we denote the space of vector valued test functions on the open set $\Omega$ and the dual  $\cD'(\Omega,\R^k)$ is the space of distributions.   Given a set $A$,  we define 
\[
\chi_A(x):=\begin{cases}
0&\text{if }x\in A\\
+\infty&\text{if }x\notin A
\end{cases}\ \mbox{ and }\ \mathbf{1}_A(x):=\begin{cases}
1&\text{if }x\in A\\
0&\text{if }x\notin A
\end{cases}
\]
the \emph{indicator} and \emph{characteristic}  functions of $A$, respectively.
We will also make use of the following standard definition (see, e.g., \cite[Definition 2.1.1]{Buttazzo89}):
a function $\ell:(0,\infty)\times \R^n\times \R^m\to (-\infty,+\infty]$  is said to be 
\begin{enumerate}[leftmargin=*]
\item an \emph{integrand} if it is measurable with respect to the Lebesgue-Borel measure $\cL\otimes \cB_n\otimes \cB_m$ on $(0,\infty)\times \R^n\times \R^m$;
\item a \emph{normal integrand}  if it is an integrand and $\ell(t,\cdot,\cdot)$ is lower semicontinuous for almost all $t\ge0$;
\item a \emph{normal convex integrand} if it is a normal integrand and, for almost all $t\ge0$, the map  $u\mapsto \ell(t,x,u)$ is convex for all $x\in \R^n$.
\end{enumerate}

\section{Preliminaries: pattern-preserving optimal control problems}\label{sec:Preliminaries}
In this section, we introduce the main definitions and some necessary preliminaries, in a general functional framework for optimal control.

Let $\cU$ and $\cX$ be two topological spaces, referred to as the spaces of \emph{controls} and \emph{states}, respectively. 
Consider a \emph{cost functional} $\cJ:\cU\times \cX\to \overline \R$ together with a set of \emph{admissible control--state pairs} $\cA\subset \cU\times \cX$. 
The corresponding \emph{optimal control problem} is formulated as
\begin{equation}\label{eq:AbstractOptContrProb0}
\begin{array}{c}
\displaystyle\inf_{\cU\times \cX}\cJ(u,x)\\[-1ex]
\ \\[-1ex]
\text{subject to}\\[-1ex]
\ \\[-1ex]
(u,x)\in \cA.
\end{array}
\end{equation}
Introducing the indicator function $\chi_{\cA}$ of the admissible set, problem~\eqref{eq:AbstractOptContrProb0} can be equivalently expressed as 
\begin{equation}\label{eq:AbstractOptContrProb}
\inf_{\cU\times \cX} (\cJ+\chi_{\cA}),
\end{equation}
so that the optimal control problem is reformulated as the minimization of the \emph{joint functional} $\cF:=\cJ+\chi_\cA$.  
With a slight abuse of terminology, we shall refer to this as \emph{the optimal control problem associated to $\cF$}.  
Moreover, $(u,x)\in \cU\times \cX$ is called a \emph{feasible couple} for $\cF$ if $\cF(u,x)<\infty$.
Similarly, a control $u\in \cU$ is called an \emph{optimal control} for $\cF$ if there exists $x\in \cX$ such that $\cF(u,x)=\inf_{\cU\times\cX}\cF$.
Next, we introduce a one-parameter \emph{family of minimization problems} corresponding to functionals $\cF_T:=\cJ_T+\chi_{\cA_T}$, where the parameter $T\ge0$ will play the role of (and can be interpreted as) the final time.  
Our goal is to analyze the asymptotic behavior of this family as $T\to \infty$. 

To this aim, we first state a crucial definition.

\begin{definition}\label{defn:EquiCoercivityGeneral}
A sequence of functionals $\cF_k:\cU\times \cX\to \overline \R$ is said to be
{\em sequentially equi-coercive w.r.t.\ $x$} if, for every $C>0$ and  every sequence $(u_k,x_k)$ such that $\cF_k(u_k,x_k)\le C$  and $u_k\to u$ in $\cU$, there exists a subsequence of $(x_k)$ converging to some  $x$ in $\cX$.
\end{definition}

Moreover, we recall the notion of minimizing sequence.
\begin{definition}\label{def_ms}
    A sequence $(u_k,x_k)$ 
in  $\cU\times \cX$ is said to be {\em minimizing} for the sequence $\cF_k:\cU\times \cX\to \overline \R$ if 
\begin{equation}\label{lili}
\liminf_{k\to\infty}\cF_{k}(u_k,x_k)=\liminf_{k\to\infty}\inf_{\cU\times \cX}\cF_{k}.
\end{equation} 
\end{definition}

\noindent It can be seen that minimizing sequences always exist, and moreover, any sequence of optimal pairs (if the optima exist) is trivially minimizing.

To move on in our investigation, we now make a specific choice of the control space $\cU$ (as topological space). Indeed, from now on we consider
\[
\cU=L^p((0,\infty),\R^m)
\]
with $p\in(1,\infty]$.
  From the topological point of view, we equip it with its weak$^\star$, or $\sigma (L^{p},L^{p'})$ topology coming from the duality $L^p=(L^{p'})^*$. We note that it is appropriate (also if unusual) to consider the weak$^\star$ topology also for $p\in (1,\infty)$, since, in such reflexive cases, it coincides with the {\em weak} one. For simplicity, we use this convention along the whole paper, and we simply use the symbol ``$\to$'' to denote such convergence.

We now provide the formal definition of the property under investigation, as introduced in~\cite[Definition  2.6]{DellaFreddi25}.

\begin{definition}[Pattern-preserving family]\label{defn:PatternPreserving}
 Let 
$\cF_T:\cU\times \cX\to \overline \R$, $T\in(0,\infty]$, be a parametrized family of optimal control problems and $0\leq T_k\to\infty$. The family $(\cF_T)$   
is said to be  $(T_k)$-{\em pattern preserving} if the following property is satisfied: 
 for any minimizing sequence  $(u_k,x_k)\in \cU\times \cX$ 
of $(\cF_{T_k})$ with 
\begin{enumerate}
\item     ${u_k}_{\vert_{[0,T_k]}}$ represented   in the form
\begin{equation}\label{eq:OptimalControlsPiecewise}
{u_k}_{\vert_{[0,T_k]}}=\sum_{j=1}^N u_j^k\mathbf{1}_{[\tau^k_{j-1},\tau^k_{j})}
\end{equation}
for suitable  $N\in \N\setminus\{0\}$ (independent of $k$)  , $u^k_1,\dots, u^k_N\in \cU$ and a partition $0=\tau^k_0\leq \tau_1^k\leq \dots\leq \tau_{N-1}^k\leq \tau^k_N=T_k$, 
\item and such that 
\begin{equation}\label{tjkujk}
\begin{array}{l}
\tau^{k}_j\to \tau^{\infty}_j\mbox{ in }\overline{\R},\\[1ex]
u_j^k\to u_j^\infty\mbox{ in }\cU,
\end{array}
\mbox{ for any }
j\in \{1,\dots, N\},
\end{equation} 
\end{enumerate}
it turns out that
\[
u_\infty:=\sum_{j=1}^{N} u_j^\infty\mathbf{1}_{[\tau^\infty_{j-1},\tau^\infty_{j})}\quad 
\text{ (with } [+\infty,+\infty):=\varnothing \text{)}
\]
is an optimal control for $\cF_\infty$.
\end{definition}

Although the definition might initially seem rather technical, it embodies an intuitive concept. When the parameter $T\in (0,\infty]$ represents the final time, in essence, it asserts that if all finite-horizon problems $\cF_{T_k}$ admit optimal controls $u_k$ 
exhibiting a piecewise structure similar to that in~\eqref{eq:OptimalControlsPiecewise}, 
then the limiting control problem $\cF_\infty$ 
also possesses an optimal control of the same type. 
The coefficients and partition points of this limiting control correspond to the limits of those associated with the sequence $(u_k)$. 
Hence, the definition provides a rigorous framework for ``transferring'' information about optimal controls from finite-horizon setting to the infinite-horizon case.

\begin{remark}[Equivalent formulation]\label{rem:equivalentFormulation}
It is worth emphasizing that the definition given above is, in principle, independent of the topology chosen on $\cU$, which may therefore be regarded as arbitrary. Throughout the remainder of this manuscript, however, we shall always endow $\cU=L^p((0,\infty),\R^m)$, with $p \in (0,\infty]$, with its weak$^\star$ topology, as already mentioned. In this setting, a more transparent  formulation of Definition~\ref{defn:PatternPreserving} can be stated for sequences of functionals that are \emph{future independent}.

We say that a functional $\cF_T:\cU\times \cX\to \overline \R$ is \emph{$T$-future independent}
if $\cF_T(u_1,x)=\cF_T(u_2,x)$ for all $(u_1,u_2,x)\in \cU^2\times \cX$ such that ${u_1}_{\vert_{(0,T)}}={u_2}_{\vert_{(0,T)}}$. In other words, $\cF_T$ is $T$-future independent whenever its value depends only on the restriction of the input variable $u$ to the interval $(0,T)$ and is therefore unaffected by the values of $u$ on $(T,\infty)$. This property is naturally satisfied by the class of functionals considered in this paper, since the parameter $T\in (0,\infty]$ represents the final time of the interval over which the minimization is performed. Under the above choice of topology on $\cU$, and assuming that each $\cF_T$ is $T$-future independent, Definition~\ref{defn:EquiCoercivityGeneral} is equivalent to the following more concise formulation:
\\
\emph{The family $(\cF_T)$   
is said to be  $(T_k)$-{\em pattern preserving} if the following property is satisfied: 
 for any minimizing sequence  $(u_k,x_k)\in \cU\times \cX$ 
of $(\cF_{T_k})$  such that  
\[
u_k\to u_\infty\text{ in } \cU, 
\]
it turns out that $u_\infty$ is an optimal control for $\cF_\infty$.}\\
The fact that Definition~\ref{defn:PatternPreserving} implies the above more concise formulation is immediate and does not need any further assumption, since the latter is simply the special case of the former corresponding to $N=1$.
The opposite implication, and thus the equivalence between these two formulations in the mentioned framework, follows  from the proof of Theorem~2.7 in~\cite{DellaFreddi25}.  This alternative characterization  provides yet another interpretation of Definition~\ref{defn:PatternPreserving}:  when the parameter $T$ is interpreted as the final time, the pattern-preserving property asserts that whenever the finite-horizon problems $\cF_{T_k}$ admit optimal controls $u_k$ and these controls converge in $\cU$ (with respect to the weak$^\star$ topology), the limiting control is necessarily optimal for the corresponding infinite-horizon problem.

By contrast, if a different topology is considered on $\cU$ and/or the family $(\cF_T)$ does not satisfy the future-independence property, the piecewise condition appearing in Definition~\ref{defn:PatternPreserving} becomes genuinely more general, even though its formulation is somewhat less immediate.
\end{remark}

Before presenting our main sufficient condition to ensure the pattern-preserving property for abstract control problems, we present a simple example for which such property is \emph{not} satisfied, somehow justifying the subsequent analysis.
\begin{example}[Non patter-preserving sequence of~OCPs]\label{ex:evans}
This (counter)-example is taken from~\cite[Example 1, pag. 5]{Evans24}.
Given a final time $T>0$, let us consider the problem of minimizing the functional
\[
J_T(u,x)=\int_0^T(u(t)-1)x(t)+\chi_{[0,1]}\big(u(t)\big)\,dt
\]
over the space $\cU=L^\infty(0,\infty)$ and under the scalar state-equation
\[
\begin{cases}
x'(t)=u(t)x(t),\\
x(0)=1.
\end{cases}
\]
As proved in~\cite[Section 4.4.2]{Evans24} (by using the Pontryagin principle), it can be seen that, for any $T>0$ an optimal control $u_T:(0,\infty)\to [0,1]$ for such problem is of the following bang-bang form:
\begin{equation*}\label{eq:uTevans}
u_T(t)=\begin{cases}
1\;\;\;\text{if } t<T-1,\\
0\;\;\;\text{if } t>T-1.
\end{cases}
\end{equation*}
Let us denote by $x_T$ the corresponding state-solution.
For any sequence $0\leq T_k\to \infty$, $(u_{T_k},x_{T_k})$ is in particular a minimizing sequence, since it is a sequence of optima. We further have that $u_{T_k}\to \bar u= \mathbf{1}_{(0,\infty)}$ (the constant function with value $1$ on $(0,\infty)$) weakly$^*$ in $\cU=L^\infty(0,\infty)$ and also uniformly on compact subsets of $(0,\infty)$.
On the other hand, such limiting control, for the infinite control problem, induces a zero cost: \[
J_\infty(\overline u,x)=\int_0^\infty 0\;dt=0,
\]
while any $1$-$0$ bang-bang control of the form $u_\tau\equiv \mathbf{1}_{(0,\tau)}$ for some $\tau\geq 0$ induces an unbounded negative cost (in particular this is the case for the constant control $u_0\equiv 0$). This implies that $\overline u$ is not an optimal control for the infinite-horizon problem, and thus the considered sequence of optimal control problems is \emph{not} pattern-preserving. 
\end{example}

We now present a statement providing  sufficient conditions for pattern-preserving property, which will be used in our following developments.
Indeed, one can prove that \emph{equi-coercivity w.r.t. $x$}  together with \emph{(sequential) $\Gamma$-convergence} constitute sufficient conditions to guarantee such a preservation of structure, as stated in what follows.

\begin{thm}\label{thm:PatternPreserving} Let $p\in (1,\infty]$ and  $\cU=L^p((0,\infty),\R^m)$ endowed with its weak$^\star$ topology. 
Consider a parametrized family of optimal control problems $\cF_T:\cU\times \cX\to \overline \R$, $T\in (0,\infty]$, and suppose that there exists a sequence $0\leq T_k\to \infty$ such that
\begin{enumerate}
    \item $\cF_{T_k}$ is sequentially equi-coercive w.r.t.\ $x$;
    \item $\displaystyle \G^-_{\rm seq}(\cU\times\cX)\lim_{k\to\infty}\cF_{T_k}=\cF_\infty$.
\end{enumerate} 
 Then $\cF_T$ is $(T_k)$-pattern preserving.
\end{thm}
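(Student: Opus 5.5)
The plan is the classical ``fundamental theorem of $\Gamma$-convergence'' argument, adapted to the fact that equi-coercivity is assumed only in the state variable while convergence of the controls is part of the hypotheses in Definition~\ref{defn:PatternPreserving}.

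\textbf{Step 1: recover a convergent control from the piecewise data.} Fix a minimizing sequence $(u_k,x_k)$ of $(\cF_{T_k})$ with the structure \eqref{eq:OptimalControlsPiecewise}--\eqref{tjkujk}. First I would show that
\[
\tilde u_k:=\sum_{j=1}^N u^k_j\mathbf{1}_{[\tau^k_{j-1},\tau^k_j)}\to u_\infty
\]
weakly$^\star$ in $\cU$. For a test function $\varphi\in L^{p'}$, the $j$-th term of $\int\langle\tilde u_k,\varphi\rangle$ equals
\[
\int \bigl\langle u^k_j,\varphi\mathbf{1}_{[\tau^\infty_{j-1},\tau^\infty_j)}\bigr\rangle
+\int \bigl\langle u^k_j,\varphi\,(\mathbf{1}_{[\tau^k_{j-1},\tau^k_j)}-\mathbf{1}_{[\tau^\infty_{j-1},\tau^\infty_j)})\bigr\rangle .
\]
The first integral converges by the weak$^\star$ convergence of $u^k_j$. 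For the second, $(u^k_j)$ is bounded in $L^p$ by the uniform boundedness principle, and $\|\varphi(\mathbf{1}_{I_k}-\mathbf{1}_{I})\|_{p'}\to0$ by dominated convergence, since the symmetric difference of the intervals shrinks, including when some $\tau^\infty_j=+\infty$. Here $p'<\infty$ is used, which holds because $p>1$.

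\textbf{Step 2: replace $u_k$ by $\tilde u_k$.} The tail of $u_k$ on $(T_k,\infty)$ is not controlled, so $u_k$ itself may not converge. I would use the $T_k$-future independence of $\cF_{T_k}$ (Remark~\ref{rem:equivalentFormulation}) to replace $u_k$ by $\tilde u_k$; this leaves $\cF_{T_k}(u_k,x_k)$ unchanged, so $(\tilde u_k,x_k)$ is still minimizing. This is the delicate point. The abstract statement does not list future independence, so I would either invoke it as standing for the problems considered, or build the truncation into the definition (i.e.\ interpret $u_k$ through its restriction to $[0,T_k]$). I expect making this step rigorous in the abstract setting to be the main obstacle.

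\textbf{Step 3: pass to the limit in the cost.} Let $m:=\liminf_k\inf\cF_{T_k}$ and pick a subsequence realizing the liminf in \eqref{lili}.
\begin{enumerate}
\item If $m=+\infty$, I would show $\inf\cF_\infty=+\infty$ via the limsup inequality, so that every control is trivially optimal.
\item Otherwise $\cF_{T_k}(\tilde u_k,x_k)\le C$ along the subsequence. Since $\tilde u_k\to u_\infty$, equi-coercivity w.r.t.\ $x$ gives a further subsequence with $x_k\to x_\infty$ in $\cX$. The sequential $\Gamma$-liminf inequality (hypothesis 2) then yields
\[
\cF_\infty(u_\infty,x_\infty)\le\liminf_k\cF_{T_k}(\tilde u_k,x_k)=m .
\]
\end{enumerate}

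\textbf{Step 4: the reverse inequality.} Conversely, for any $(v,y)\in\cU\times\cX$, a recovery sequence $(v_k,y_k)\to(v,y)$ with $\limsup_k\cF_{T_k}(v_k,y_k)\le\cF_\infty(v,y)$ gives
\[
m\le\liminf_k\cF_{T_k}(v_k,y_k)\le\cF_\infty(v,y).
\]
Hence $\cF_\infty(u_\infty,x_\infty)\le\inf\cF_\infty$, so $u_\infty$ is optimal, with state $x_\infty$. Note that passing to subsequences is harmless, since the liminf along a subsequence only increases and the recovery bound still holds there.
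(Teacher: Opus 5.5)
Your argument is the paper's argument. The paper's proof also replaces $u_k$ by $u_k\mathbf{1}_{[0,T_k)}$ (``taking it equal to $0$ in $(T_k,\infty)$''). It asserts that this is still a minimizing sequence with the same pattern, and then runs the fundamental-theorem argument of \cite[Theorem~2.7]{DellaFreddi25}. Your Steps~1, 3 and~4 correctly supply what the paper only cites.

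The point you flag in Step~2 is exactly the one the paper does not justify, and your suspicion is right. It is a genuine gap, not a formality: without $T_k$-future independence, or some control of the tails, the statement is false. Take $\cX$ a singleton (so equi-coercivity w.r.t.\ $x$ is automatic), $p=2$, $v=\mathbf{1}_{(0,1)}$, $\cF_\infty(u)=\|u-v\|_2^2$ and
\[
\cF_T(u)=\|(u-v)\mathbf{1}_{(0,T)}\|_2^2-2\cdot\mathbf{1}_{E_T}(u),\qquad E_T:=\bigl\{u:\ u\mathbf{1}_{(0,T)}=0,\ \|u\mathbf{1}_{(T,\infty)}\|_2\ge T\bigr\}.
\]
Weakly convergent sequences are bounded, so they eventually avoid $E_{T_k}$. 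Hence $\cF_{T_k}$ $\Gamma$-converges to $\cF_\infty$: the liminf inequality follows from weak lower semicontinuity of the norm, and constant sequences are recovery sequences. Yet $u_k:=T_k\mathbf{1}_{(T_k,T_k+1)}$ attains $\inf\cF_{T_k}=-1$ for $T_k\ge1$ and has $u_k|_{[0,T_k]}=0$. So $u_\infty=0$, while $\cF_\infty(0)=1>0=\cF_\infty(v)$.

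Your first fallback, invoking future independence ``as standing'', also does not cover the problems to which the paper applies the theorem, despite Remark~\ref{rem:equivalentFormulation}. For finite $T$, $\cA_T$ forces $u=u_f(\cdot,x(\cdot))$ on $(T,\infty)$. So altering the tail of a feasible pair (e.g.\ setting it to $0$ when $u_f\not\equiv0$) gives $\cF_T=+\infty$.

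What actually makes Theorem~\ref{thm:Main} work is a different mechanism, and it points to the right repair. Suppose $\cF_{T_k}(u_k,x_k)\le C$. Then $(u_k,x_k)\in\cA_{T_k}$. The dissipativity estimate in the proof of Proposition~\ref{prop:EquiCoercivity}, which does not use convergence of $u_k$, keeps every $x_k$ in a fixed compact set $H$. Hence $|u_k|\le\alpha_H$ on $(T_k,\infty)$, and $u_k\mathbf{1}_{(T_k,\infty)}\to0$ weakly$^\star$ because $p'<\infty$. Together with your Step~1 this gives $u_k\to u_\infty$ for the original sequence, and your Steps~3--4 then go through with no replacement at all.

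So the abstract statement needs an extra hypothesis. Any one of the following suffices:
\begin{itemize}
\item future independence;
\item weak$^\star$ vanishing of the tails $u_k\mathbf{1}_{(T_k,\infty)}$ along bounded-cost sequences;
\item equi-coercivity in $u$ as in \cite{DellaFreddi25}, in which case every limit point of $(u_k)$ agrees with $u_\infty$ against compactly supported test functions, hence equals $u_\infty$.
\end{itemize}
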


\begin{proof} The proof proceeds along the same lines of \cite[Theorem 2.7]{DellaFreddi25} once having observed that for every minimizing sequence as in Definition \ref{defn:PatternPreserving} there exists also a bounded one with the same properties. Indeed, the sequence ${u_k}_{\vert_{[0,T_k]}}$ is bounded in $\cU$, because 
for every $j\in\{1,\dots,N\}$ the sequence  $k\mapsto u_j^k$ is bounded in~$\cU$ (being converging). Thus,  also the sequences  $k\mapsto v_j^k:=u_j^k\mathbf{1}_{[\tau^k_{j-1},\tau^k_{j})}$ are bounded, so implying the boundedness of the finite sum    ${u_k}_{\vert_{[0,T_k]}}=\sum_{j=1}^Nv_{j\vert_{[0,T_k]}}^k$. 
Nevertheless,  we can always modify such sequence (for instance,  by taking it equal to $0$ in $(T_k,\infty)$) in order to obtain a bounded minimizing sequence which satisfies again properties \eqref{tjkujk}. This means that, to guarantee pattern preservation, we can weaken the coercivity requirement with respect to $u$ made in \cite{DellaFreddi25} and ask only for coercivity w.r.t.\ $x$ as formulated in the statement.
\end{proof}

The formal definition of $\displaystyle \G^-_{\rm seq}$-convergence is recalled in~Appendix~\ref{sec:Appendix}.
We note that, for equi-coercivity and 
$\Gamma$-convergence to be meaningful and constructive, a precise definition of the state space $\cX$ and its topology is required. This will be provided in the following subsections.

\begin{remark}
For the family of control problems considered in Example \ref{ex:evans}, the equi-coercivity assumption could be satisfied 
by taking a weak enough topology on the state space $\cX$. Nevertheless, with respect to any reasonable choice of this kind, the $\G$-convergence assumption {\it 2.\ } cannot be satisfied.  Indeed, given $u_T\to\ubar$ as in the example, 
we correspondingly have
$$
x_T(t)=e^t 1_{[0,T-1]}(t)+e^{T-1} 1_{[T-1,+\infty)}(t)\to x^{\ubar}=e^t,
$$
and 
$$
\cF_T(u_T,x_T)=-e^{T-1}\to-\infty\quad\mbox{ and }\quad \cF_\infty(\ubar,x^{\ubar})=0, 
$$
which contradicts the {\em $\G$-liminf inequality} (see Remark~\ref{rem_lirs} in Appendix).

\end{remark}

In this manuscript we focus our attention in presenting dissipativity-based sufficient conditions for both the \emph{equi-coercivity hypothesis} and the \emph{$\Gamma$-convergence hypothesis}, generalizing and developing the results contained in~\cite{DellaFreddi25}, where the relation of these properties with dissipativity theory was not analyzed.

\subsection{Admissible sets defined by controlled ODEs}
\label{sec:ODEandchar}

In this subsection  we introduce the class of differential (controlled) state equations used in the sequel to define the admissible sets $\cA_T$, for $T\in (0,\infty]$. We state the minimal assumption to ensure existence and uniqueness of solutions with respect to control inputs in $\cU=L^p((0,\infty),\R^m)$   with  $p\in(1,\infty]$.


Given $a:[0,\infty)\times \R^n\to \R^n$, $b:[0,\infty)\times \R^n\to \R^{n\times m}$  and $x_0\in \R^n$ we consider the Cauchy problem defined by 
\begin{equation}\label{eq:CauchyProblem}
\begin{cases}
x'(t)= a(t,x(t))+b(t,x(t))u(t),\\
x(0)=x_0.
\end{cases}
\end{equation}
Such class of systems is often referred to as \emph{(nonlinear) systems affine in control}
or simply
\emph{control-affine nonlinear systems}.
We now introduce some basic hypotheses on the functions $a$ and $b$ that ensure existence and uniqueness of solutions  to problem~\eqref{eq:CauchyProblem}. 

\begin{assumption}[Carath\'eodory-Lipschitz conditions]\label{assumpt:Basic0_L1} 
The functions $a$ and $b$ satisfy the following properties:
\begin{enumerate}[leftmargin=*]
\item there exists $M\in L^{1}_{\rm loc}([0,\infty))$ and, for any compact set $H\subset\R^n$, there exists $A_H\in L^{1}_{\rm loc}([0,\infty))$ such that
\[
\begin{aligned}
 &|a(t,0)|\leq M(t)\;\;\mbox{ for a.a.}\  t\ge0,\\
&|a(t,x_1)-a(t,x_2)|\leq A_H(t)|x_1-x_2| \;\;\text{ for all }  x_1, x_2 \in H\mbox{ and a.a.}\ t\ge0;
\end{aligned} 
\]
\item  there exists  $N\in L^{p'}_{\rm loc}([0,\infty))$ and, for any compact set $H\subset\R^n$, there exists $B_H\in L^{p'}_{\rm loc}([0,\infty))$ such that 
\[
\begin{aligned}
 &|b(t,0)|\leq N(t)\;\;\mbox{ for a.a.}\  t\ge0,\\
&|b(t,x_1)-b(t,x_2)|\leq B_H(t)|x_1-x_2| \;\;\text{ for all }  x_1, x_2 \in H\mbox{ and a.a.}\ t\ge0.
\end{aligned} 
\]
\end{enumerate}
\end{assumption}

\noindent Throughout the paper we suppose that Assumption~\ref{assumpt:Basic0_L1} be satisfied.

 Given $u\in \cU$, any function $f_u:[0,\infty)\times \R^n\to \R^n$ such that
 $f_u(t,x):=a(t,x)+b(t,x)u(t)$ for almost all $t\in [0,\infty)$ and all $x\in \R^n$,  
 satisfies the classical Carath\'eodory-Lipschitz conditions for existence and uniqueness of solutions.
This implies that, for any $u\in \cU$ and any $x_0\in 
\R^n$, there exists a \emph{unique locally absolutely continuous maximal solution} (see~\cite[Section I.5]{Hale}) to the Cauchy problem \eqref{eq:CauchyProblem}
denoted by $x^{u,x_0}:\text{dom}(x^{u,x_0})\to \R^n$. 
Here, $\text{dom}(x^{u,x_0})$ denotes the effective  interval of definition of the maximal solution, and it is thus of the form $[0,\tau)$ with $\tau>0$ possibly equal to $\infty$. We recall that we can identify the space of locally absolutely continuous function on $[0,\tau)$ with the Sobolev space $\Wl^{1,1}([0,\tau),\R^n)$.
For a formal definition of such (local) Sobolev space, we refer to~\cite[Appendix]{DellaFreddi25}.\\
Summarizing, under Assumption~\ref{assumpt:Basic0_L1},  given any $u\in \cU$ and any $x_0\in \R^n$ there exists a unique $x\in \Wl^{1,1}([0,\tau),\R^n)$  maximal solution 
to the Cauchy problem~\eqref{eq:CauchyProblem}, denoted by $x^{u,x_0}$.

Whenever the initial condition $x_0\in \R^n$ and/or the control $u$ are fixed or evident from the context, 
we shall simplify the notation by writing $x^{u,x_0}$ as $x^u$, or simply as $x$.

 \subsection{Sequences of control problems with increasing horizon}\label{subsec:SequenceofContProb}

In this section we introduce the considered class of  (sequences of)
optimal control problems.
Let us recall that the control space $\cU=L^p((0,\infty),\R^m)$ is  endowed by  the weak$^\star$ topology. The space of states is, from now on,  considered to be $\cX=\Wl^{1,1}([0,\infty),\R^n)$, and it is equipped with \emph{the strong topology of $L^\infty_{\loc}([0,\infty),\R^n)$}, i.e., the topology of the uniform convergence on compact subintervals of $[0,\infty)$, for reasons illustrated in the subsequent Remark~\ref{rem:Choice}.

 We consider  $a:[0,\infty)\times \R^n\to \R^n$ and $b:[0,\infty)\times \R^n\to \R^{n\times m}$, satisfying Assumption~\ref{assumpt:Basic0_L1}, an initial state $x_0\in \R^n$
and an integrand $\ell:(0,\infty)\times \R^n\times \R^m\to (-\infty,+\infty]$.
The class of problems considered in this manuscript, 
parametrized by the final time $T\in (0,+\infty]$, are of the form
\begin{subequations}\label{eq:OptimalControlProblem}
\begin{equation}\label{eq:CostFunctionalSec3}
\inf_{(u,x)\in \cU\times \cX}J_T(u,x):=\inf_{u\in \cU}\int_0^T \ell(t,x(t),u(t))\,dt
\end{equation}
subject to
\begin{equation}\label{eq:CauchyProblem1}
x'(t)
=a(t,x(t))+b(t,x(t))u(t)\;\;\;\;\;\;\text{in }\; [0,T],
\end{equation}
\begin{equation}\label{eq:Initialcond1}
x(0)=x_0.
\end{equation}
\end{subequations}

 Let us stress the fact that we are assuming, for simplicity, that $x(t)$ be a solution of the Cauchy problem on the closed interval $[0,T]$. In other words, controls $u$ are considered to be admissible only if the maximal interval $[0,\tau)$ of existence of the corresponding solution $x^{u,x_0}$ satisfies $\tau>T$. Such a requirement will allow for a convenient reformulation of the problem presented in the forthcoming Subsection~\ref{Sec:Boundedness} and arise, for instance, if a global solution exists on the space  $\cX:=W^{1,1}_{\loc}([0,\infty),\R^n)$. Considering as ``admissible'' controls leading to unbounded solutions on $[0,T]$ (even if the cost is finite) would lead to more technical difficulties. 

\begin{remark}[The choice of the state space and its topology]\label{rem:Choice}
We have chosen the state space to be the topological space
\begin{equation}\label{defX}
\cX=W^{1,1}_{\loc}([0,\infty),\R^n)
\end{equation}
equipped with the strong topology of $L^\infty_{\loc}([0,\infty),\R^n)$, i.e., the topology of uniform convergence on compact subintervals of $[0,\infty)$.
The main technical motivations for such choice are illustrated in what follows, in a descriptive sense.
In order to be suitable to satisfy the conditions of Theorem~\ref{thm:PatternPreserving}, the topologies selected on $\cU$ and $\cX$ must strike a delicate balance: 
they should be sufficiently strong to ensure $\Gamma$-convergence of the sequence of joint functionals, 
while remaining weak enough to guarantee equi-coercivity of these functionals.  
Under standard growth conditions, weak or weak* topologies naturally emerge as plausible candidates.  
While such topologies often suffice for the control space, the weak topology of $W_{\loc}^{1,1}$ turns out to be ``too weak'' to allow passage to the limit in the state equations.  
Indeed, it is known (see, for example, \cite[Proposition~5.3.3]{Buttazzo89}) that a sufficient condition for passing to the limit is the (local) uniform convergence of the states.  
However, the embedding $W_{\loc}^{1,1}\subset L_{\loc}^\infty$ is not compact, and weak convergence in $W_{\loc}^{1,1}$ does not imply local uniform convergence.  
To address this issue, we directly endow $\cX$ with the 
the strong topology of  $L_{\loc}^\infty([0,\infty),\R^n)$,
as in~\cite{DellaFreddi25}.   
Analogously to the control space $\cU$, convergence in $\cX$ with respect to this topology will simply be denoted by the symbol ``$\to$''.
 \end{remark}
We recall an instrumental lemma that relates different notions of convergence and which is used in what follows.
\begin{lemma}\label{lem_ucbs}
 Let $q\in(1,\infty]$.  The space $\Wl^{1,q}([0,\infty),\R^n)$ is compactly embedded in $L^\infty_\loc([0,\infty),\R^n)$. 
In particular, weakly$^\star$ converging sequences in $\Wl^{1,q}([0,\infty),\R^n)$  are strongly converging in $L^\infty_\loc([0,\infty),\R^n)$, that is, uniformly converging on the compact subsets of $[0,\infty)$. 
\end{lemma}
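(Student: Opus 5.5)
The plan is to reduce the statement to the Arzelà--Ascoli theorem on each compact interval $[0,T]$, followed by a diagonal argument. First I fix the meaning of the objects. A function $x\in\Wl^{1,q}([0,\infty),\R^n)$ is locally absolutely continuous with $x'\in L^q_\loc$. A sequence $(x_k)$ is bounded in $\Wl^{1,q}$ if, for every $T>0$, the norms $\|x_k\|_{L^q(0,T)}+\|x_k'\|_{L^q(0,T)}$ are bounded uniformly in $k$. Compactness of the embedding is then meant in the sequential sense: every such locally bounded sequence has a subsequence converging uniformly on every compact subinterval. Weak$^\star$ convergent sequences are locally bounded by the uniform boundedness principle on each $[0,T]$, so the ``in particular'' part follows once compactness and identification of the limit are established.

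\textbf{Step 1 (uniform bounds and equicontinuity on $[0,T]$).} For $s<t$ in $[0,T]$, H\"older's inequality gives
\[
|x_k(t)-x_k(s)|\le\int_s^t|x_k'|\,d\tau\le |t-s|^{1/q'}\|x_k'\|_{L^q(0,T)},
\]
with $1/q'=1-1/q>0$ because $q>1$. This is exactly where $q>1$ is used: for $q=1$ the estimate degenerates and the embedding fails to be compact. The family $(x_k)$ is therefore equi-H\"older on $[0,T]$.

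For a uniform sup bound, I write $x_k(t)=x_k(s)+\int_s^t x_k'$ and average in $s$ over $[0,T]$. This yields
\[
|x_k(t)|\le T^{-1}\|x_k\|_{L^1(0,T)}+\|x_k'\|_{L^1(0,T)},
\]
and both terms are bounded by H\"older's inequality and the local $W^{1,q}$ bound.

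\textbf{Step 2 (Arzelà--Ascoli and diagonalization).} By Arzelà--Ascoli, on $[0,1]$ there is a subsequence converging uniformly. From it I extract a further subsequence converging uniformly on $[0,2]$, and so on for $[0,j]$, $j\in\N$. The diagonal subsequence then converges uniformly on every $[0,T]$, to a continuous function $x$. This is the topology of $L^\infty_\loc$, which proves the compactness claim.

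\textbf{Step 3 (identifying the limit for weak$^\star$ convergence).} Suppose $x_k\to x$ weakly$^\star$ in $\Wl^{1,q}$; this includes weak convergence of $x_k$ in $L^q(0,T)$ for each $T$. By Steps 1--2, every subsequence has a further subsequence converging locally uniformly to some $y$. Uniform convergence on $[0,T]$ implies convergence in the sense of distributions there, and so does weak$^\star$ convergence, so $y=x$ a.e., and hence everywhere as both are continuous. Since every subsequence has a sub-subsequence with the same limit $x$, the whole sequence converges to $x$ uniformly on compacts.

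The only step needing care is the uniform sup bound in Step 1, which must not rely on pointwise values such as $x_k(0)$. The averaging argument handles it without such an assumption, and the remaining steps are routine.
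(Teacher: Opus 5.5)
The paper gives no proof of this lemma itself (it defers to the appendix of \cite{DellaFreddi25}), and your argument is the standard proof of this fact and is correct: equi-H\"older bounds together with the averaged sup estimate, Arzel\`a--Ascoli on each $[0,j]$ with a diagonal extraction, and identification of the limit through distributional convergence plus a sub-subsequence argument. The only imprecision is harmless: for $q=\infty$, weak$^\star$ convergence in $\Wl^{1,\infty}$ yields weak$^\star$ rather than weak convergence in $L^\infty(0,T)$, but you only use the resulting convergence in $\cD'$, which holds in both cases.
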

For the formal definition and a detailed discussion of local Sobolev spaces and the corresponding notions of convergence, we refer the reader to~\cite[Appendix]{DellaFreddi25} and the references therein.

On the running cost function $\ell:[0,\infty)\times \R^n\times \R^m \to \overline \R$ in~\eqref{eq:CostFunctionalSec3} we make the following assumptions. Let us suppose that there exist a  normal integrand $\ell_1:[0,\infty)\times\R^n\to \overline \R$ and a  normal convex integrand
$\ell_2:[0,\infty)\times \R^n\times \R^m \to \overline \R$ such that 
\begin{enumerate}[label=(\alph*)]\setcounter{enumi}{0}
\item \emph{(separation)} $\ell(t,x,u)=\ell_1(t,x)+\ell_2(t,x,u)$ for all $(t,x,u)\in [0,\infty)\times \R^n\times \R^m$;
\item 
\emph{(existence of a  bounded feedback control strategy)} there exists $u_f:(0,\infty)\times \R^n\to \R^m$ Borel-measurable  such that, for any compact set $K\subset \R^n$ 
\begin{enumerate}
\item[(b1)] 
  there exists $\alpha_K\in L^p(0,\infty)$ such that
$$
|u_f(t,x)|\le\alpha_K(t)\quad \forall\, x\in K \mbox{ and a.a.}\ t>0;
$$
\item[(b2)] 
 there exists $m_K\in  L^1(0,\infty)$ such that
\begin{equation}\label{eq:StructureofTheCost1}
\ell_2(t,x,u_f(t,x))\leq m_K(t)\;\;\;\;\forall\,x\in K\text{ and a.a.\ } t>0;
\end{equation} 
\end{enumerate}
\item \emph{(lower bound)} there exists $\gamma_i\in L^1(0,\infty)$ ($i=1,2$) such that 
\begin{equation*}
\begin{cases}
   \ell_1(t,x)\ge -\gamma_1(t),\\  \ell_2(t,x,u)\ge -\gamma_2(t),
   \end{cases}
    \;\;\;\; \forall\,x\in \R^n,\ \forall\,u\in\R^m\mbox{ and a.a.}\ t>0.
\end{equation*}
\end{enumerate}

\begin{remark}\label{rem:ell} Let us make the following remarks on the assumptions.
\begin{enumerate}[leftmargin=*]
\item The functions $\gamma_i$ can always be taken non-negative and we assume that it is so for the rest of the paper. Moreover, let  $\gamma:=\gamma_1+\gamma_2$.
\item The separation assumption (a) allows to incorporate  state constraints of the form $x(t)\in X$, for a prescribed closed subset $X$ of $\R^n$, into the running cost by taking $\ell_1(t,x)=\chi_X(x)$. On the contrary, adding the indicator of $X$ to the integrand $\ell_2$ is prevented by assumption (b2). 
\item The existence of a control strategy $u_f(t,x)$ with the properties required in (b) is very natural and weak. Indeed, the first candidate is the so-called {\em greedy control strategy}, i.e., a function satisfying  
\begin{equation}
\label{eq:greedy}    
u_g(t,x)\in \arg\min_{u\in U}\ell_2(t,x,u),\;\;\;\text{for a.a. $t$ in $(0,\infty)$.}
\end{equation}
The choice $u_f=u_g$ usually works in a wide range of applications  including the very relevant one 
in which $\ell_2$ is of the ``discounted'' form
  \[
  \ell_2(t,x,u) = e^{-\lambda t} r(x,u),
  \]
  where $r:\R^n\times \R^m\to [0,\infty)$ is continuous in $x$ and convex in $u$, and $\lambda > 0$ is a discount parameter. This setting is classical in \emph{economic optimal control}, which  focuses on minimizing performance indices that directly represent operational efficiency. In the discounted-cost setting, future costs are exponentially weighted by a discount factor, emphasizing short-term performance and ensuring convergence of the infinite-horizon cost functional.
\item In the  dissipative framework developed below, hypothesis (b1) suits better compared to the corresponding one made in \cite{DellaFreddi25}. This is clear from the proof of the following $\G$-convergence theorem (Proposition \ref{prop:Gamma-Convergence}).
Moreover, there are simple problems in which the actual version of (b1) is satisfied while the former is not, as the following example shows.
\end{enumerate}
\end{remark}

\begin{example}
Consider the problem to minimize the functional 
$$
J_T(u,x)=\int_0^T|u(t)+x(t)|^2\,dt
$$
under the state equation 
$$
\begin{cases}
x'=u\\
x(0)=x_0>0
\end{cases}
$$
and $\cU=L^2(0,\infty)$. The system is dissipative with respect to the running cost with the coercive storage function $S(x)=x^2$ (see the subsequent Definition~\ref{defn:Dissipativity}). By choosing $\ell_1=0$ and $\ell_2(t,x,u)=|u+x|^2$ we have that the greedy control strategy is $u_g(t,x)=-x$. With the choice $u_f=u_g$ (see \eqref{eq:greedy}), all assumptions (a), (b) and (c) are satisfied. Nevertheless, assumption (b1) of \cite{DellaFreddi25} is not satisfied. 
\end{example}

\subsection{A suitable re-formulation} \label{Sec:Boundedness}
We now re-cast the optimal control problem 
stated   in~\eqref{eq:OptimalControlProblem} in a form more suitable to our purposes  by replacing the admissible set with some other suitable smaller sets $\cA_T$ for $T<\infty$.
To this aim,  we use the feedback strategy $u_f$ introduced in assumption (b).  
Under Assumption~\ref{assumpt:Basic0_L1} we consider problems, parametrized by $T\in (0,\infty]$, defined by
\begin{subequations}\label{eq:RiscritturaProblem}
\begin{equation}\label{eq:AbstractOptContrProb00}
\begin{aligned}
\begin{array}{c}
\displaystyle\inf
\cJ_T(u,x)\\[-1ex]
\ \\[-1ex]
\text{subject to}\\[-1ex]
\ \\[-1ex]
(u,x)\in \cA_T,
\end{array}
\end{aligned}
\end{equation}
with 
\begin{equation}\label{eq:AdmissibleSetA_T}
\begin{aligned}
& \cA_T := \Biggl\{ (u,x)\in \cU\times\cX \;\Big|\;
x^{u,x_0}\mbox{ exists in }[0,T],\text{ and }\\
&\hspace{26ex}   x(t)=\begin{cases}
x^{u,x_0}(t) & \text{if } t\in[0,T],\\
x(T)e^{-(t-T)} & \text{if } t\in(T,+\infty),
\end{cases} 
 \\ &\hspace{26ex}  u(\cdot) \equiv u_f(\cdot,x(\cdot))  \; \text{in } (T,+\infty)\hspace{12ex}\Biggr\},\\ 
&\cA_\infty := \Bigl\{ (u,x)\in \cU\times\cX \;\Big|\;
x = x^{u,x_0} \;\; \text{in } [0,\infty) \Bigr\},
\end{aligned}
\end{equation}
where the notation $x^{u,x_0}$ has been introduced in Subsection~\ref{sec:ODEandchar} to denote the unique maximal solution to \eqref{eq:CauchyProblem1}-~\eqref{eq:Initialcond1}.
It is worth noting that $\cA_T$ is well defined  because the definition of $x$ does not depend on the values of $u$ after time $T$. In other words, the control $u$ can be modified after time $T$ without affecting $x$. Moreover, $x(T)\in\R^n$ because it is required that the solution exists in the closed interval $[0,T]$ (according to the formulation of Problem \eqref{eq:OptimalControlProblem}).    About the modification of the control, since $u_f:(0,\infty)\times \R^n\to \R^m$ is Borel measurable and $x$ is continuous, then the composition  $ u_f(\cdot,x(\cdot)):(0,\infty)\to\R^m$ is Lebesgue measurable. 
Moreover,  by the local boundedness of $u_f$ (hypothesis~\textnormal{(b1)}) and since since $x$ is bounded 
then we have that $u_f(\cdot,x(\cdot))\in L^p((T,\infty),\R^m)$.

The cost functional, for any $T\in (0,\infty]$, denoted by $\cJ_T:\cU\times \cX\to \overline \R$ is defined as
\begin{equation}\label{eq_coerick}
  \cJ_T(u,x):=\int_0^T \ell(t,x(t),u(t))\,dt,
\end{equation}
where, we recall, $\ell:(0,\infty)\times \R^n\times \R^m\to (-\infty,+\infty]$ is an integrand.
For every $T\in (0,\infty]$ we can thus define the \emph{joint functional} $\cF_T:\cU\times\cX\to \overline \R$ by
\begin{equation}\label{eq:JointFunctional}
\cF_T=\cJ_T+\chi_{\cA_T}.
\end{equation}
\end{subequations}
It is worth noting that problems~\eqref{eq:OptimalControlProblem} and~\eqref{eq:RiscritturaProblem} are equivalent from the point of view of the pattern-preserving property. That is, the following proposition holds.

\begin{prop}
 The original family of optimal control problems in~\eqref{eq:OptimalControlProblem} is pattern-preserving if and only if its reformulation in~\eqref{eq:RiscritturaProblem} is also pattern-preserving.    
\end{prop}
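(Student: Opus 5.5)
The plan is to show that the two families have the same feasible values on $[0,T]$. They differ only in what happens after time $T$, and, by Remark~\ref{rem:equivalentFormulation}, the pattern-preserving property depends only on the restrictions ${u_k}_{\vert_{[0,T_k]}}$ together with the minimizing character of the sequence.

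Denote by $\wt\cF_T$ the joint functional of the original problem~\eqref{eq:OptimalControlProblem}. Its admissible set consists of the pairs $(u,x)$ with $x=x^{u,x_0}$ on $[0,T]$ and $\cJ_T$ finite; the state on $(T,\infty)$ is the natural extension, or is unconstrained if one reads~\eqref{eq:OptimalControlProblem} literally. The reformulation $\cF_T$ is given by~\eqref{eq:JointFunctional}. For $T=\infty$ the two coincide, since $\cA_\infty$ is exactly the admissible set of~\eqref{eq:OptimalControlProblem} with $T=\infty$. The whole argument therefore concerns finite $T$.

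First I will check that the infima agree for $T<\infty$. If $(u,x)$ is feasible for $\wt\cF_T$, I define $\hat u:=u$ on $(0,T)$ and $\hat u:=u_f(\cdot,\hat x(\cdot))$ on $(T,\infty)$. I define $\hat x:=x^{u,x_0}$ on $[0,T]$ and $\hat x(t):=x(T)e^{-(t-T)}$ afterwards. Then $\hat x\in\cX$. By (b1), $\hat u\in\cU$, because $\hat x$ is bounded on $[T,\infty)$ (it lies in the segment between $0$ and $x(T)$). Hence $(\hat u,\hat x)\in\cA_T$ and $\cF_T(\hat u,\hat x)=\cJ_T(u,x)$, since the cost only involves $[0,T]$. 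Conversely, every element of $\cA_T$ is feasible for the original problem with the same cost. So $\inf\cF_T=\inf\wt\cF_T$ for every $T$, and a sequence of pairs that agrees on $[0,T_k]$ with a minimizing sequence of one family, with equal cost, is minimizing for the other family as well.

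Next I transfer the property. Suppose the reformulation is pattern preserving, and let $(u_k,x_k)$ be minimizing for $(\wt\cF_{T_k})$ with the piecewise structure~\eqref{eq:OptimalControlsPiecewise}--\eqref{tjkujk}. I pass to the modified pairs $(\hat u_k,\hat x_k)$. These are minimizing for $(\cF_{T_k})$ and have the same restrictions to $[0,T_k]$, so they admit the same representation with the same $u_j^k$ and $\tau_j^k$. Consequently the limit control $u_\infty$ of Definition~\ref{defn:PatternPreserving} is the same object, and it is optimal for $\cF_\infty=\wt\cF_\infty$. For the converse, a minimizing sequence of $(\cF_{T_k})$ is already feasible for $\wt\cF_{T_k}$ with equal values and equal infima, so it is minimizing there too, and the conclusion is immediate.

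The main obstacle is one technical point. In the literal formulation~\eqref{eq:OptimalControlProblem}, a pair $(u,x)$ carries an $x$ that is unconstrained after $T$, and the liminf in~\eqref{lili} must be unaffected by the modification. This holds because the modification preserves the value of each term, and because the data in~\eqref{eq:OptimalControlsPiecewise} depend only on ${u_k}_{\vert_{[0,T_k]}}$. The other point requiring care is measurability and the $L^p$ bound of $u_f(\cdot,\hat x(\cdot))$, which follow from (b1) applied with a compact $K$ containing the bounded range of $\hat x$.
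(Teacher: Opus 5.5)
Your proposal is correct and follows the same route as the paper. The steps match: tail replacement by $x(T)e^{-(t-T)}$ and the feedback $u_f$, equality of the infima for finite $T$, and transfer of minimizing sequences whose restrictions to $[0,T_k]$ (hence the data $N$, $\tau_j^k$, $u_j^k$, $u_\infty$) are unchanged, with $\cF_\infty$ common to both formulations.

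You are slightly more careful than the paper: you evaluate the feedback along the modified state $\hat x$, as membership in $\cA_T$ requires, and you write out the converse. Two small adjustments, though. The argument rests directly on Definition~\ref{defn:PatternPreserving}, not on Remark~\ref{rem:equivalentFormulation}, since the reformulated $\cF_T$ is not $T$-future independent because of its tail constraint. Also, terms of a minimizing sequence that are infeasible for the original problem should simply be left unmodified, since they have value $+\infty$ for both functionals.
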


\begin{proof}
Let us denote by $\cA^o_T$ the admissible set defined by \eqref{eq:CauchyProblem1} and \eqref{eq:Initialcond1} , 
and $\cF^o_T$ the corresponding joint functional. By definition we have $\cA_T\subseteq\cA_T^o$ and,  
therefore,  $\inf\cF_T\ge\inf\cF_T^o$ for every $T<\infty$ (and the equality holds when $T=\infty$). On the other hand,  the equality  between the infima holds also when $T<\infty$. Indeed,  
if $(u,x)\in\cA_T^o$ then the pair  $(\wt u,\wt x)\in \cU\times \cX$ defined by 
\begin{subequations}\label{eq:TailReplacement}
\begin{equation}
\wt x(t):=\begin{cases}
    x^{u,x_0}(t)&\text{if }t\leq T,\\
 x^{u,x_0}(T)e^{-(t-T)}&\text{if }t> T,
\end{cases}
\end{equation}
\begin{equation}
\wt u(t):=\begin{cases}
u(t)&\text{if }t\leq T,\\
u_f(t,x(t))&\text{if }t> T,
\end{cases}
\end{equation}
\end{subequations}
belongs to $\cA_T$ and the cost in $[0,T]$ is the same.  Hence the claimed equality. 

Let, now, $0<T_k\to\infty$ and suppose that $\cF_{T_k}$ is pattern-preserving. To prove that also $\cF_{T_k}^o$ is pattern-preserving we have to consider a minimizing sequence $(u_k,x_k)$ 
for $\cF_{T_k}^o$ that satisfies conditions  1. and 2.
of Definition \ref{defn:PatternPreserving}. Then, it is easily seen that also the modified sequence  $(\wt u_k,\wt x_k)$ (obtained as in \eqref{eq:TailReplacement} with $T_k$ in place of $T$)  satisfies the same conditions with the same {\em patterns} $N$, $\tau_J^k$,  $u_j^k$ and $u_\infty$. Since, by assumption, $\cF_{T_k}$ is pattern-preserving then $u_\infty$ is optimal for $\cF_\infty$ which, on the other hand, is the same for the original and the reformulated problem. Then we conclude that, by definition, also  $\cF_{T_k}^o$ is pattern preserving. The vice-versa is similar and left to the reader. 
\end{proof}

\begin{remark}[Motivations behind the new formulation]\label{rem_efc}
The new formulation of the admissible set $\cA_T$ in~\eqref{eq:AdmissibleSetA_T} when $T$ is finite has the advantage of providing a way to monitor/bound the behavior of the control $u$ and the state $x$ on the tail interval $[T,+\infty)$, which is not involved in the integral defining the cost at stage $T$.
 This in fact ensures that $u$ is bounded on $[T,\infty)$ (compatible with the condition $u\in \cU$).

Also the state $x$ is forced,  on the unbounded interval $[T,\infty)$, to be decreasing, converging to $0$ and such that $x_{\vert [0,\infty)}\in W^{1,q}([T,\infty),\R^n)$, for all $q\in [1,\infty]$.
Moreover, this behavior on the mentioned tail is designed to preserve continuity of $x$ at time $T$, according  with the requirement that $x\in \cX=\Wl^{1,1}([0,\infty),\R^n)$.

Both these features will be crucial in proving equi-coercivity and $\Gamma$-convergence when considering sequences of problems $\cF_{T_k}$ (defined as in~\eqref{eq:JointFunctional}) for $T_k\to \infty$.
\end{remark}

\section{Dissipativity and pattern-preserving property}\label{sec:ParticularCases}
In this section we introduce the concept of \emph{dissipativity} of control dynamical systems, and we illustrate how it can relate with the \emph{pattern-preserving property}, as stated in Definition~\ref{defn:PatternPreserving}, via  Theorem~\ref{thm:PatternPreserving}.
In particular, we show that dissipativity (that is formally defined below), can be seen as:
\begin{itemize}[leftmargin=*]
\item a crucial sufficient condition that ensures equi-coercivity (recall Definition~\ref{defn:EquiCoercivityGeneral}) for the sequence of optimal control problems defined in~\eqref{eq:RiscritturaProblem}, for any sequence $0\leq T_k\to \infty$;
\item  a convenient assumption in proving $\Gamma$-convergence of the sequences  $\cF_{T_k}$ towards $\cF_\infty$, that is the other main ingredient of our ``abstract'' patter-preserving Theorem~\ref{thm:PatternPreserving}.
\end{itemize}
This will allow us to establish the pattern-preserving property for some families of optimal control problems that have not been explicitly treated  in~\cite{DellaFreddi25}. \\
To this end, we introduce the formal definition of \emph{dissipativity}, which is classical in control theory.

\begin{definition}[Dissipativity]\label{defn:Dissipativity}
Let us consider a functional control space $\cU$.
The state equation~\eqref{eq:CauchyProblem1} is \emph{dissipative} with respect to the \emph{supply function} $r:(0,\infty)\times \R^n\times \R^m\to \overline \R$ if there exists  $S:\R^n \to \R$, 
called a \emph{storage function}, such that
\begin{equation}
S(x)\geq 0,\;\;\;\forall \;x\in \R^n,
\end{equation}
\begin{equation}\label{eq:Dissipativityequation}
\begin{aligned}
    S(x^{u,x_0}(t))-S(x_0)\leq \int_0^t r(s,x^{u,x_0}(s), u(s))\,ds,\;\;\;\; \\ 
\forall u\in \cU,\;\forall x_0\in \R^n, \;\forall \,t\in \mathrm{dom}(x^{u,x_0}).
\end{aligned}
\end{equation}
\end{definition}

In the rest of the paper, we always suppose that the supply rate is an integrand, so that that the integral in the right-hand side of~\eqref{eq:Dissipativityequation} is well-defined (as extended real number).
To our purposes, if we were interested in a known and fixed initial condition $x_0\in \R^n$, we could impose a dissipativity property only for solutions starting at $x_0$. Since this is not common in control theory and does not provide any significant simplification from the technical point of view, we do not pursue this path. Similarly, one could consider a \emph{local} version of Definition~\ref{defn:Dissipativity} in which only particular subsets of $\R^n$ are considered, see for example~\cite{BorMash06}. Even tough this can be natural in some situations (notably, state-constraint optimal control problem), for simplicity we only consider the global notion of dissipativity, as introduced in Definition~\ref{defn:Dissipativity}.\\
We recall  (see for example~\cite[Chapter~3]{VanDerSchaft17})
that, if $S\in \cC^1(\R^n)$, then~\eqref{eq:Dissipativityequation} is equivalent to the condition
\begin{equation}\label{eq:DifferentialCoercivity}
\begin{aligned}
    \inp{\nabla S(x)}{a(t,x)+b(t,x)u}\leq r(t,x,u)\; \; \;\forall (x,u)\in \R^n\times U,\text{ for a.a. }t\in (0,\infty).
\end{aligned}
\end{equation}
\begin{remark}[Brief history of dissipativity and its applications in optimal control]
Dissipativity theory, introduced axiomatically by Jan C. Willems in the early 1970s (see~\cite{Willems73,Willems1972b} and also~\cite{BorMash06} for a recent overview), provides a  unifying framework for several classical notions in control theory, including  stability, passivity and storage-function methods for  dynamical systems. Its central construct (a storage function paired with a supply rate) gives a physically intuitive certificate of how ``supplied energy is stored or dissipated'' by the system. Historically, this viewpoint bridged frequency-domain concepts and state-space Lyapunov analyses, enabling robust and interconnection-based controller design for both linear and nonlinear systems.

In recent years the interplay between dissipativity and (infinite-horizon) optimal control problems has experienced renewed interest. Dissipativity has been shown to underpin \emph{turnpike phenomena}: optimal trajectories for long-horizon problems spend most of the time near a steady optimal operating point, a fact that can be proved using suitable storage-function inequalities. This connection has direct consequences for model-predictive control (MPC) design and numerical strategies for solving large-horizon and infinite-horizon problems (see \cite{Hara2023,Gruene2022, Faulwasser21, Angeli2024} and references therein). For a more direct illustration of how our subsequent results relate to the turnpike-based analysis, we refer the reader to the numerical examples presented in Section~\ref{Sec:Examples}.

Overall, the historical energy-based language introduced by Willems has matured into a practical set of tools that both explain qualitative optimal-control behavior (turnpikes, inverse optimality) and provide constructive constraints for contemporary algorithmic approaches in infinite-horizon formulations of optimal controls problems.
We pursue here this line of research, highlighting the relations between dissipativity and pattern-preserving property (see Definition~\ref{defn:PatternPreserving}) of sequences of optimal control problems with increasing time-horizons.
\end{remark}
In what follows, we say that a function $S:\R^n\to \R$ is \emph{coercive} if the sublevel sets $L_S(a)=\{x\in \R^n\vert\;S(x)\leq a\}$ are bounded, for all $a\in \R$.\\
We now present the main result below.
\begin{thm}[Dissipativity and pattern-preservation]\label{thm:Main}
Let us consider the class of optimal control problems in~\eqref{eq:RiscritturaProblem}.  Besides assumptions (a), (b) and (c) stated in Subsection~\ref{subsec:SequenceofContProb} on the running cost $\ell$,   
suppose that 
 \begin{enumerate}[leftmargin=*]
\item 
there exists $q\in (1,p)$ such that Assumption~\ref{assumpt:Basic0_L1}  holds in a stronger form\footnote{ Note that $\frac{pq}{p-q}>q$ and also $\frac{pq}{p-q}>p'$. In the case $p=\infty$, with an abuse of notation we set $\frac{pq}{p-q}=q$, as in the limit.}   with $M\in L_\loc^q([0,\infty))$, $N\in L_\loc^{\frac{pq}{p-q}}([0,\infty))$ and $A_H\in L_\loc^q([0,\infty))$, $B_H\in L_\loc^{\frac{pq}{p-q}}([0,\infty))$ for any compact set $H\subset \R^n$;
\item  the control system be dissipative w.r.t.\ the running cost $\ell$ with a \emph{coercive} storage function $S:\R^n \to \R$.
\end{enumerate}
Then, for every $x_0\in \R^n$ and any $0\leq T_k\to \infty$, the sequence of functionals $\cF_{T_k}$ (defined in~\eqref{eq:JointFunctional}) is $T_k$-pattern preserving.
\end{thm}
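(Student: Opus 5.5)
The plan is to apply Theorem~\ref{thm:PatternPreserving}, so it suffices to show two things for an arbitrary sequence $0\le T_k\to\infty$: sequential equi-coercivity of $(\cF_{T_k})$ with respect to $x$, and $\G^-_{\rm seq}$-convergence of $\cF_{T_k}$ to $\cF_\infty$ in $\cU\times\cX$.

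\textbf{Equi-coercivity.} Take $(u_k,x_k)$ with $\cF_{T_k}(u_k,x_k)\le C$ and $u_k\to u$ weakly$^\star$ in $L^p$, so that $\|u_k\|_p\le R$. For $t\le T_k$, apply dissipativity \eqref{eq:Dissipativityequation} with $r=\ell$. Splitting $\int_0^t\ell=\int_0^{T_k}\ell-\int_t^{T_k}\ell$ and using the lower bound (c), we get
\[
S(x_k(t))\le S(x_0)+C+\|\gamma\|_1 .
\]
Coercivity of $S$ then gives $|x_k(t)|\le\rho$ on $[0,T_k]$. On $(T_k,\infty)$ the exponential tail only shrinks $|x_k|$, so $x_k$ stays in a fixed compact set $H$ for all $t$. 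With this uniform bound, Assumption~\ref{assumpt:Basic0_L1} in the strengthened form of hypothesis~1 yields, on $[0,T_k]$,
\[
|x_k'|\le M+A_H\rho+(N+B_H\rho)|u_k|.
\]
By H\"older with exponents $\frac{pq}{p-q}$ and $p$, the term $(N+B_H\rho)|u_k|$ is bounded in $L^q_\loc$. The tail has $|x_k'|\le\rho$. Hence $(x_k)$ is bounded in $\Wl^{1,q}$ with $q>1$, and Lemma~\ref{lem_ucbs} gives a subsequence converging locally uniformly, which is the required compactness in $\cX$.

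\textbf{$\Gamma$-liminf.} Let $(u_k,x_k)\to(u,x)$ with $\liminf_k\cF_{T_k}(u_k,x_k)<\infty$, and pass to a subsequence with bounded cost. The bounds above give local uniform convergence. Passing to the limit in the integral form of the ODE on each fixed $[0,T]$ uses weak convergence of $b(t,x_k)u_k$, which holds because $b(\cdot,x_k)\to b(\cdot,x)$ strongly in $L^{p'}_\loc$ via the $B_H$ Lipschitz bound. This shows $x=x^{u,x_0}$ on $[0,\infty)$, i.e.\ $(u,x)\in\cA_\infty$. For the cost, write
\[
\int_0^{T_k}\ell\ \ge\ \int_0^{T}(\ell+\gamma)-\int_0^\infty\gamma
\]
for $k$ large. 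Lower semicontinuity of $(u,x)\mapsto\int_0^T(\ell+\gamma)$ under weak$^\star\times$uniform convergence follows from $\ell_1$ normal, $\ell_2$ normal convex in $u$ and nonnegativity (Ioffe's theorem / \cite{Buttazzo89}). Letting $T\to\infty$ and using $\gamma\in L^1$ gives $\cF_\infty(u,x)\le\liminf_k\cF_{T_k}(u_k,x_k)$.

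\textbf{$\Gamma$-limsup.} Given $(u,x)\in\cA_\infty$ with finite cost, define the recovery pair $(\wt u_k,\wt x_k)$ by the tail replacement \eqref{eq:TailReplacement} at $T_k$. Then $\wt x_k\to x$ locally uniformly, since $\wt x_k=x$ on $[0,T_k]$. For $\wt u_k\to u$ weakly$^\star$, the point is that $\|\wt u_k\|_p$ stays bounded: dissipativity and finite cost keep $x$ in a compact set $K$, so by (b1) $|u_f(t,x(t))|\le\alpha_K(t)$ with $\alpha_K\in L^p$. Moreover $\cF_{T_k}(\wt u_k,\wt x_k)=\int_0^{T_k}\ell\to\int_0^\infty\ell$, since $\ell+\gamma\ge0$ is integrable.

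\textbf{Main obstacle.} I expect the hardest step to be the passage to the limit in the state equation together with the lower semicontinuity of the cost. Both require the $L^{pq/(p-q)}$ integrability to pair with $u_k$ and careful handling of extended-valued normal integrands, for which I would rely on classical results from \cite{Buttazzo89} as in \cite{DellaFreddi25}.
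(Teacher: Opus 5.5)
Your proposal is correct and has the same architecture as the paper. You apply Theorem~\ref{thm:PatternPreserving}, obtain equi-coercivity exactly as in Proposition~\ref{prop:EquiCoercivity} (storage-function bound, H\"older with exponents $\frac{pq}{p-q}$ and $p$, Lemma~\ref{lem_ucbs}), and get $\Gamma$-convergence from a liminf inequality plus a recovery sequence, as in Proposition~\ref{prop:Gamma-Convergence}.

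The genuinely different step is the liminf inequality for the cost. The paper shifts to nonnegative integrands, treats $\ell_1$ by Fatou, and for $\ell_2$ shows that the tails $\int_{T_k}^\infty\ell_2(t,x_k,u_k)\,dt$ vanish. That argument uses $u_k=u_f(\cdot,x_k)$ on $(T_k,\infty)$, the compact bound on $x_k$ coming from dissipativity, and (b2). It then applies lower semicontinuity of $\cI_\infty$ directly to $(u_k,x_k)$. You instead fix $T\le T_k$, drop $\int_T^{T_k}(\ell+\gamma)\ge0$, apply Ioffe's theorem on $[0,T]$, and let $T\to\infty$. Your route is shorter. It shows the liminf inequality needs neither dissipativity nor (b2), only (c) and the normal-convex structure, so the compactness the paper invokes at that point is dispensable.

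You also correctly identify where dissipativity is used in the recovery sequence. Finite cost and a coercive $S$ keep $x^{u,x_0}$ in a fixed compact set on all of $[0,\infty)$. Hence (b1) bounds every tail $u_f(t,\wt x_k(t))$ by a single $\alpha_K\in L^p$, which is what makes $(\wt u_k)$ bounded and weakly$^\star$ convergent. The paper appeals to (b1) there after noting only seminorm-wise (local) boundedness of $(x_k)$, so your version makes this step explicit.

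Your separate treatment of the exponential tail in the derivative estimate is also more careful than \eqref{eq:DerivativeProveConvergence}, which formally uses the ODE on all of $[0,\infty)$.

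One small correction to your closing remark. The strengthened integrability $N,B_H\in L^{\frac{pq}{p-q}}_\loc$ is needed only for the $\Wl^{1,q}$ bound in the equi-coercivity step. Passing to the limit in the state equation needs only $B_H\in L^{p'}_\loc$, and the lower semicontinuity uses nothing about $a,b$.
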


\begin{remark}[Discussions on the hypotheses]
We now discuss the assumptions of the theorem.  
For control-affine systems, Assumption~\ref{assumpt:Basic0_L1}, even in its strengthened form required by the theorem, can be regarded as a mild condition. In particular, it holds trivially for autonomous (a.k.a.\ time-independent)  systems, if the functions $a$ and $b$ are locally Lipschitz continuous with respect to the state variable.

The hypotheses on the running cost function $\ell$ may appear intricate, yet they are in fact quite natural, as already discussed in Remark~\ref{rem:ell}. We want to stress again that the component of the cost function independent of the control (denoted by $\ell_1$) is a  normal integrand and it is allowed to   
take the value $+\infty$. Consequently, this term can also be used to model \emph{state constraints}, by writing
$
\ell_1(t,x) = \widetilde{\ell}_1(t,x) + \chi_X(x)$, where $X \subset \R^n$ denotes a closed (to preserve lower-semicontinuity) state-constraint set and $\widetilde{\ell}_1$ is a (proper) nonnegative normal integrand. 
It is worth observing that, if $X$ is compact and the system is dissipative with respect to the cost, then the coercivity requirement imposed on the storage function $S$ in hypothesis~{2.} of Theorem~\ref{thm:Main} is not essential, and can be avoided. Indeed, given any storage function $S:\R^n \to \R$, one can construct a modified storage function that coincides with $S$ on $X$ and is rendered coercive outside $X$, while preserving the dissipation inequality~\eqref{eq:Dissipativityequation}. Consequently, the decrease condition remains unaffected by the modification.
Similarly, if $X \subset \mathbb{R}^n$ is compact and the running cost $\ell$ is nonnegative, the dissipativity assumption can be entirely dispensed. Indeed, it is sufficient to consider the function $S:\mathbb{R}^n \to \mathbb{R}$ defined by $S(x)=\mathrm{dist}_X(x):=\min_{z\in X}|x-z|$.
One readily verifies that $S$ is a coercive storage function in this setting.

Regarding the control-dependent part of the running cost (i.e., the integrand $\ell_2$), we assume it to be a convex normal integrand, and can be used to model \emph{control  constraints} of the form  \begin{equation}\label{eq:COntrolCOntraint} 
u(t)\in U\ \mbox{for a.a.} \ t\in(0,\infty),
\end{equation}
for a convex and closed set $U\subset \R^m$ by means of an additive contribution of the form  $\chi_{U}(u)$, that is, 
by requiring that $\ell(t,x,u)=+\infty$ whenever  $u\in\R^m\setminus U$.
In this case, the compactness and convexity of the set of control values $U\subset \R^m$  is required by the convexity and lower semicontinuity of $\ell_2$, and   is also a classical assumption.

The assumption (b) on the integrand $\ell$ has already been discussed in Remark~\ref{rem:ell}, revealing that several classes of running costs commonly considered in the literature fall within the scope of our assumptions.

Then, arguably, the most demanding assumption is the \emph{dissipativity} of the state equation with respect to the running cost and with a coercive storage function.  Besides simple cases in which it is trivially satisfied, like the already mentioned ones, this property must therefore be verified on a case-by-case basis,  for instance by searching for suitable storage functions within candidate classes such as quadratic, polynomial, or rational functions, and enforcing the associated dissipation inequalities either via analytical arguments or numerical methods, e.g., linear matrix inequalities (LMIs)~(see e.g.~\cite{BEFB:94}), sum-of-squares (SOS) relaxations~(see e.g.~\cite{sostools,caltechthesis}), or other related tools from systems theory. These are precisely the same families of computational techniques that are routinely employed in Lyapunov theory for linear, nonlinear, and hybrid systems. 
Summarizing, dissipativity verification closely parallels classical Lyapunov analysis. See, for instance,~\cite[Chapter~1]{Arcak2016} for a review of numerical methods for dissipativity analysis.
From this perspective, the theorem can, rather bluntly, be summarized by the catchy statement that ``\emph{dissipativity implies the pattern-preserving property}''. Even if this is a tempting and fallacious shortcut, the main message of this manuscript is precisely to highlight the deep connection between these two seemingly distant notions.
\end{remark}
The next subsections are devoted to prove Theorem~\ref{thm:Main}. We split the presentation into the two main hypotheses of our abstract pattern-preserving sufficient criterion provided by Theorem~\ref{thm:PatternPreserving}: \emph{equi-coercivity (w.r.t. $x$)} and \emph{$\Gamma$-convergence}. 

\subsection{Dissipativity and equi-coercivity}
In this subsection we provide our first intermediate result in proving~Theorem~\ref{thm:Main}, relating the concept of dissipativity with equi-coercivity.
\begin{prop}[Sufficient conditions for equi-coercivity]\label{prop:EquiCoercivity}
Let us consider the class of optimal control problems in~\eqref{eq:RiscritturaProblem}. Under the assumptions of Theorem \ref{thm:Main}, for any initial condition $x_0\in \R^n$ and any sequence $0\leq T_k\to \infty$, we have
\begin{enumerate} \item\label{it:boundbound}  for any 
$C>0$ and any sequence $(u_k,x_k)$ such that $u_k\to u$ in $\cU$ and $\cF_{T_k}(u_k,x_k)\le C$ $\forall\,k\in\N$, the sequence $x_k$ is bounded in $L^\infty([0,\infty),\R^n)$;
\item\label{it:diss-coerc} the sequence of functionals $\cF_{T_k}$ (defined in~\eqref{eq:JointFunctional}) is sequentially equi-coercive w.r.t. $x$.
\end{enumerate}
\end{prop}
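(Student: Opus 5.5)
For item~\ref{it:boundbound}, the plan is to combine the dissipation inequality~\eqref{eq:Dissipativityequation} with the lower bound (c) on the cost. Fix $(u_k,x_k)$ with $\cF_{T_k}(u_k,x_k)\le C$. Then $(u_k,x_k)\in\cA_{T_k}$, so $x_k=x^{u_k,x_0}$ on $[0,T_k]$, and on $(T_k,\infty)$ the state is $x_k(T_k)e^{-(t-T_k)}$. For $t\in[0,T_k]$ we use the identity
\[
\int_0^t \ell\,ds=\int_0^{T_k}\ell\,ds-\int_t^{T_k}\ell\,ds\le C+\int_t^{T_k}(\gamma_1+\gamma_2)\,ds\le C+\|\gamma\|_1 ,
\]
where we write $\ell=\ell(s,x_k(s),u_k(s))$ and use $\ell\ge-\gamma$ from (c). Dissipativity with supply rate $r=\ell$ then gives
\[
S(x_k(t))\le S(x_0)+C+\|\gamma\|_1 \qquad\forall\, t\in[0,T_k].
\]
The right-hand side does not depend on $k$ or $t$. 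Since $S$ is coercive, $x_k(t)$ stays in a fixed bounded sublevel set $L_S(S(x_0)+C+\|\gamma\|_1)$ on $[0,T_k]$. On the tail, $|x_k(t)|\le|x_k(T_k)|$ because of the exponential decay. Together these give the uniform $L^\infty$ bound $\|x_k\|_\infty\le R$. The weak$^\star$ convergence of $u_k$ is not even needed for this item.

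For item~\ref{it:diss-coerc}, the plan is to apply Lemma~\ref{lem_ucbs} with the exponent $q\in(1,p)$ from hypothesis~1 of Theorem~\ref{thm:Main}. We show that $(x_k)$ is bounded in $W^{1,q}((0,\tau),\R^n)$ for every fixed $\tau>0$; a diagonal argument then extracts a subsequence converging locally uniformly, i.e.\ in $\cX$. Fix $\tau$ and take $k$ with $T_k>\tau$, which holds for all large $k$. Let $H$ be the closed ball of radius $R$. On $[0,\tau]$, Assumption~\ref{assumpt:Basic0_L1} gives
\[
|x_k'|\le |a(t,0)|+A_H|x_k|+\big(|b(t,0)|+B_H|x_k|\big)|u_k|\le M+RA_H+(N+RB_H)|u_k| .
\]
The first group of terms is bounded in $L^q(0,\tau)$ by hypothesis~1. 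For the product term we use Hölder with exponents $\tfrac{p}{q}$ and $\tfrac{p}{p-q}$:
\[
\|(N+RB_H)\,u_k\|_{L^q(0,\tau)}\le\|N+RB_H\|_{L^{\frac{pq}{p-q}}(0,\tau)}\,\|u_k\|_{L^p} .
\]
When $p=\infty$ the convention $\tfrac{pq}{p-q}=q$ applies and the estimate is direct. The norms $\|u_k\|_p$ are uniformly bounded because $u_k\to u$ weakly$^\star$ and the uniform boundedness principle applies. Together with $|x_k(0)|=|x_0|$, this bounds $x_k$ in $W^{1,q}(0,\tau)$ uniformly in $k$.

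The main obstacle is the first step. One must check that the dissipation inequality can be applied on each $[0,t]$ with $t\le T_k$ even though the integrand $\ell$ may take the value $+\infty$ or be negative. The lower bound $\gamma\in L^1$ is exactly what makes the tail integral $\int_t^{T_k}\ell\,ds$ bounded below, and hence the partial costs bounded above. We also need $[0,T_k]\subset\mathrm{dom}(x^{u_k,x_0})$, and this is guaranteed by the definition of $\cA_{T_k}$. The Sobolev step is routine once the uniform sublevel bound is in hand, since it fixes the compact set $H$ independently of $k$.
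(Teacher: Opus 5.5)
Your proof is correct and follows essentially the same route as the paper. The paper also combines the dissipation inequality with the $L^1$ lower bound $\gamma$ to obtain a uniform sublevel-set bound on $[0,T_k]$, uses the exponential tail to get the global $L^\infty$ estimate, and then applies H\"older with exponents $p/q$ and $p/(p-q)$ to bound $x_k$ in $W^{1,q}_{\rm loc}$ before invoking Lemma~\ref{lem_ucbs}. Your restriction to $[0,\tau]$ with $T_k>\tau$ is in fact slightly more careful than the paper. The paper writes the ODE-based derivative bound on the whole half-line, including the tail where $x_k$ does not solve the state equation; this is harmless, since there $|x_k'|\le R$.
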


\begin{proof}
Let us fix any $x_0\in \R^n$ and any sequence $0\leq T_k\to \infty$.  Consider $C>0$ and a sequence $(u_k,x_k)$ such that $u_k\to u$ in $\cU$ and $\cF_{T_k}(u_k,x_k)\leq C$.
By definition of $\cF_{T_k}$ this first implies that $
(u_k,x_k)\in \cA_{T_k}$ for all $k\in \N$, i.e., 
\[
x_k(t)=\begin{cases}x^{u_k,x_0}(t) \;\;\;&\text{if } t\in [0,T_k]\\
x_k(T_k)e^{-(t-T_k)}\;\;\;&\text{if } t\in (T_k,+\infty),\end{cases}
\]
and
\[
u_k(t)=u_f(t,x_k(t)) 
\text{ a.e.\ in }(T_k,\infty).
\]

By the dissipativity hypothesis and the non-negativity of $\gamma$ and $\ell+\gamma$, we have that
 \[
 \begin{aligned}
S(x_k(t))&-S(x_0)\leq  \int_0^t\ell(s,x_k(s),u_k(s))\,ds \leq \int_0^t\ell(s,x_k(s),u_k(s))+ |\gamma(s)|\;ds
\\&\leq \int_0^{T_k}\ell(t,x_k(t),u_k(t))+ |\gamma(t)|\;dt\leq \int_0^{T_k}\ell(t,x_k(t),u_k(t))\,dt+\int_0^{T_k} |\gamma(t)|\;dt\\& \leq C+\|\gamma\|_1,
\end{aligned}
 \]
 for all $k\in \N$ for all $t\in [0,T_k]$.
 Since the storage function $S$ is coercive, we have that there exists a compact set $K\subset \R^n$ such that 
 \[
x_k(t)\in K\;\;\;\;\forall k\in \N,\;\;\forall \;t\in [0,T_k].
 \]
 Since on the tails $[T_k,\infty)$ the functions $x_k$ are all smaller (in norm) than $x_k(T_k)$, 
 we have that
 \[
x_k(t)\in \text{conv}\{K,-K\}=:H\;\;\;\;\forall k\in \N,\;\;\forall \;t\ge0,
 \]
which also implies that  there exists a $M_1>0$ such that  $\|x_k\|_\infty\leq  M_1$, 
proving Item~\ref{it:boundbound}.\\
Now, using Assumption~\ref{assumpt:Basic0_L1}  we have
\begin{equation}\label{eq:DerivativeProveConvergence}
\begin{split}
    &\left|x'_k(t)\right|  \leq |a(t,x_k(t))|+|b(t,x_k(t))||u_k(t)| \\ 
    &\ \leq M(t)+A_{H}(t)M_1+(N(t)+B_H(t)M_1)|u_k(t)|.
\end{split}
\end{equation}
By using Holder's inequality (multiple times) and by boundedness of $u_k$ in $\cU$ (since it is a converging sequence), our standing assumptions on the functions $A_{H},B_H, M$ and $N$ imply that $x_k'$ is bounded in $L^q_{\loc}([0,\infty),\R^n)$.
We have thus shown  that the sequence $x_k$ is bounded in $\Wl^{1,q}([0,\infty),\R^n)$, and it thus admits a weakly$^\star$ converging subsequence $x_k\weaks x$ in $\Wl^{1,q}([0,\infty),\R^n)$. Then, by  Lemma~\ref{lem_ucbs}, this implies that $x_k\to x$ in $\cX$. Hence the claimed equi-coercivity w.r.t. $x$.
\end{proof}

\begin{example}
We show that dissipativity is indeed an essential sufficient condition, by presenting an example in which all  other assumptions of Proposition~\ref{prop:EquiCoercivity} hold, but the sequence of optimal control problems is not equi-coercive (w.r.t. $x$).
Let us consider the scalar system defined by
\begin{equation}
\begin{cases}\label{eq:CounterexampleSystem}
x'(t)=\mathbf{1}_{[0,1]}(t)u(t)x^2(t)\\
x(0)=1,
\end{cases}
\end{equation}
with  $u\in \cU=L^\infty(0,\infty)$. The cost, for any $T\in (0,\infty]$, is defined by
\[
\cJ_T(u,x):=\int_0^T e^{-t}u(t)+\chi_{[0,1]}(u(t))\;dt.
\]
It is easily checked that the hypotheses of Proposition~\ref{prop:EquiCoercivity} are satisfied  (with $\ell(t,x,u)=e^{-t}|u|+\chi_{[0,1]}(u)$,  for $p=\infty$ and any $q>1$), with the sole exception of the assumption of dissipativity with a coercive storage function. 
Given $1\le T_k\to \infty$, let us consider the sequence $u_k$ defined by
\[
u_k(t)=\begin{cases}
 1-\frac{1}{T_k}\;\;&\text{if }t\in [0,1],\\
0\;\;&\text{if }t\in (1, \infty).
\end{cases}
\]
 for any $k\in \N$. A solution $x_k\in \Wl^{1,1}([0,\infty))$ to the system~\eqref{eq:CounterexampleSystem} exists and, with the prolongation introduced in~\eqref{eq:AdmissibleSetA_T}, reads
\[
x_k(t)=x^{u_k,1}(t)=\begin{cases}
\frac{1}{1-(1-T^{-1}_k)t}\;\;\;&\text{if }t\in [0,1),\\
T_k\;\;\;&\text{if }t\in [1,T_k),\\
T_ke^{-(t-T_k)}&\text{if }t\in [T_k,\infty).
\end{cases}
\]
Then, we can note that it holds that
\[
\cJ_{T_k}(x_k,u_k)=\int_0^{T_k}e^{-t}u_k(t)\;dt\leq \int_0^\infty e^{-t}\;dt=1.
\]
The sequence $u_k$ is clearly bounded in $L^\infty$, and it holds that $u_k\stackrel{*}\wto u$, with $u=\mathbf{1}_{[0,1]}$. On the other hand, $x_k$ is unbounded in $L^\infty(0,1)$, and thus cannot converge uniformly on compact intervals, violating equi-coercivity for the considered sequence of optimal control problems.
We can also note that the control $u=\mathbf{1}_{[0,1]}$ leads to the solution to~\eqref{eq:CounterexampleSystem} given by
\[
x(t)=\frac{1}{1-t},\;\;\;\;t\in [0,1)
\]
which does not admit any prolongation to a function in $\Wl^{1,1}([0,\infty))$.\\
 It is also worth noting that the system is dissipative in the general definition, i.e., if  non-coercive storage functions are allowed. Indeed, consider, for instance, the candidate storage $S(x) = \frac{e^{-1}}{1+x^2}$. Computing the derivative of $S$, we find that condition~\eqref{eq:DifferentialCoercivity} then reads
\begin{equation*}
    \begin{split}
        \frac{-2x^3e^{-1}}{\left(1+x^2\right)^2} \mathbf{1}_{[0,1]}(t)u \leq e^{-t} u\;\;\;  \;
        \forall (x,u)\in \R^n\times [0,1],\text{ and for a.a. }t\in \R_+.
    \end{split}
\end{equation*}
If $u=0$ or $t>1$, then the inequality is trivially satisfied. If $u>0$, we can divide by $u$ in both sides. Thus, for $t \in [0,1]$ we need to verify
\begin{equation*}
    \frac{-2x^3e^{-1}}{\left(1+x^2\right)^2}  \leq e^{-t} \;\;\;\forall x\in \R^n\text{ and for }t\in [0,1].
\end{equation*}
Since $e^{-t} \geq e^{-1}$ holds for $t \in [0,1]$, it suffices to verify that ${ \frac{-2x^3}{\left(1+x^2\right)^2} } \leq 1$ for all $x\in \R^n$. For $x \geq 0$, this last inequality holds trivially. For $x<0$, the maximum on the left hand side is attained at $x = -\sqrt{3}$ and is given by $\frac{3 \sqrt{3}}{8} < 1$. Therefore, the dissipativity property is  satisfied and $S(x) = \frac{e^{-1}}{1+x^2}$ is a \emph{non-coercive} storage function.

Concluding the (counter-) example, let us note that dissipativity w.r.t.\ the function $\ell(t,x,u):=e^{-t}u$ with a coercive storage function $S:\R^n\to \R$
would imply the following notion of \emph{uniform boundedness}: for any $x_0\in \R^n$ there exists a bounded set $X_{x_0}\subset \R^n$ such that $x^{u,x_0}(t)\in X_{x_0}$ for every $u\in \wt\cU:= L^\infty((0,\infty),[0,1])$ (i.e., the set of controls satisfying the control constraint $u(t)\in [0,1]$) and every $t\in \textit{\rm dom}(x^{u,x_0})$.  Indeed, for any $x_0\in \R^n$ and any $u\in \wt \cU$  we would have
\[
S(x^{u,x_0}(t))\leq \int_0^te^{-s}u(s)\;ds+S(x_0)\leq 1+S(x_0) 
\]
and thus, by coercivity of $S$ there exists a bounded set $X_{x_0}:=\{x\in \R^n\;\vert\;S(x)\leq 1+S(x_0)\}$ such that 
$
x^{u,x_0}(t)\in X_{x_0}$, for all $u\in \wt \cU$ and all $t\in [0,\infty)$.

The fact that uniform boundedness implies equi-coercivity was already noted in~\cite[Subsection 6.1]{DellaFreddi25}. Indeed, regardless of the  running cost function, uniform boundedness is a sufficient condition for the implication
\[
u_k\to u\;\;\Rightarrow\;\;x^{u_k,x_0}\to x^{u,x_0},
\] 
 when $\{u_k\}\subset L^\infty((0,\infty), U)$ with $U$ a compact convex set (see~\cite[Proposition 6,3]{DellaFreddi25}). The \emph{dissipativity property}  (with coercive storage functions) can thus be considered as a running-cost-dependent generalization of uniform boundedness  in order to ensure equi-coercivity. 
\end{example}


 \subsection{Dissipativity and \texorpdfstring{$\Gamma$}{Gamma}-Convergence}

In this section we study the relation between dissipativity and the second main ingredient of proof of Theorem~\ref{thm:Main} (via the abstract Theorem~\ref{thm:PatternPreserving}): the $\Gamma$-convergence 
of the problems $\cF_T$ (defined in~\eqref{eq:RiscritturaProblem}) to $\cF_\infty$, as $T\to \infty$.

As a preliminary step, we  state an instrumental convergence result for solutions to the Cauchy problem~\eqref{eq:CauchyProblem}, which holds only under Assumption~\ref{assumpt:Basic0_L1}, and it is thus independent of the dissipativity assumption.
\begin{lemma}\label{lemma:GammaCOnvergenceofSolutions}
Under Assumption~\ref{assumpt:Basic0_L1}, for any sequence $0\le T_k \to \infty$  and any $x_0 \in \mathbb{R}^n$, if $(u_k, x_k) \to (u, x)$ and $(u_k, x_k) \in \mathcal{A}_{T_k}$ for infinitely many $k \in \mathbb{N}$, then $(u, x) \in \mathcal{A}_\infty$.

\end{lemma}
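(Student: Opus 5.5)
We pass to the limit in the integral form of the state equation on each fixed compact interval. Pass to the subsequence of indices $k$ with $(u_k,x_k)\in\cA_{T_k}$ (still denoted $k$). Fix $T>0$. Since $T_k\to\infty$, for all $k$ large we have $T_k>T$, so on $[0,T]$ the function $x_k$ coincides with the genuine solution $x^{u_k,x_0}$. Hence
\[
x_k(t)=x_0+\int_0^t a(s,x_k(s))\,ds+\int_0^t b(s,x_k(s))u_k(s)\,ds\qquad\forall\,t\in[0,T].
\]
The goal is to let $k\to\infty$ in this identity. Since $T$ is arbitrary, this gives $x(t)=x_0+\int_0^t\big(a(s,x(s))+b(s,x(s))u(s)\big)\,ds$ for all $t\ge0$. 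Thus $x$ is a global locally absolutely continuous solution of \eqref{eq:CauchyProblem}, and by uniqueness of the maximal solution $x=x^{u,x_0}$ on $[0,\infty)$, i.e. $(u,x)\in\cA_\infty$.

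First, convergence $x_k\to x$ in $\cX$ means uniform convergence on $[0,T]$. Therefore there is a compact set $H\subset\R^n$ (a closed ball) containing $x_k([0,T])$ for all large $k$ and also $x([0,T])$. On $H$ we use the local Lipschitz bounds of Assumption~\ref{assumpt:Basic0_L1}. For the drift term,
\[
\Big|\int_0^t a(s,x_k)-a(s,x)\,ds\Big|\le \|A_H\|_{L^1(0,T)}\,\|x_k-x\|_{L^\infty(0,T)}\to0 .
\]
For the control term we split:
\[
\int_0^t b(s,x_k)u_k-b(s,x)u=\int_0^t\big(b(s,x_k)-b(s,x)\big)u_k+\int_0^t b(s,x)(u_k-u).
\]
The first piece is bounded by $\|B_H\|_{L^{p'}(0,T)}\|x_k-x\|_{L^\infty(0,T)}\|u_k\|_{L^p(0,T)}$. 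The second piece tends to $0$ because $u_k\to u$ weakly$^\star$ in $L^p$. The test function is $\mathbf 1_{[0,t]}\,b(\cdot,x(\cdot))^\top$, applied componentwise. It lies in $L^{p'}((0,\infty),\R^{n\times m})$, since $|b(s,x(s))|\le N(s)+B_H(s)\sup_{[0,T]}|x|$ with $N,B_H\in L^{p'}_\loc$.

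The main point requiring care is the boundedness of $\|u_k\|_{L^p}$ used in the first piece. For $p\in(1,\infty)$, weakly convergent sequences are bounded by the uniform boundedness principle. For $p=\infty$, weak$^\star$ convergent sequences in the dual of the Banach space $L^1$ are likewise bounded by Banach--Steinhaus. So sequential convergence suffices here, and no separate boundedness hypothesis is needed. A minor technical point is measurability of $s\mapsto b(s,x(s))$. It follows from the Carath\'eodory structure: $b(\cdot,y)$ is measurable and $b(t,\cdot)$ is continuous by the Lipschitz bound. Passing to the limit pointwise in $t\in[0,T]$ then yields the integral equation, which concludes the proof.
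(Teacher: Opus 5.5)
Your proof is correct and follows essentially the same route as the paper. You pass to the limit in the state equation using the local-uniform convergence of $x_k$ and the local Lipschitz bounds of Assumption~\ref{assumpt:Basic0_L1}, handle the control-linear term through weak$^\star$ convergence (and the resulting boundedness) of $u_k$, and conclude by uniqueness of the maximal solution. The only difference is cosmetic: you work with the integral form on $[0,T]$, which builds in $x(0)=x_0$, whereas the paper passes to the limit in the distributional form with test functions in $\mathcal{D}((0,\infty),\R^n)$ and recovers the initial condition separately. One small detail: take your ball $H$ centered at the origin, so that the bound $|b(s,x(s))|\le N(s)+B_H(s)\sup_{[0,T]}|x|$ is justified.
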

\begin{proof}
Let us consider $0\leq T_k\to \infty$ and  $(u_k,x_k)\to (u,x)$ such that $(u_k,x_k)\in \AI_{T_k}$ for infinitely many $k\in\N$. By definition of $\AI_{T_k}$, this means that there exists a subsequence (not relabeled) such that   $x_k(t)=x^{u_k,x_0}(t)$ for all $k\in \N$ and all $t\in [0,T_k]$.
Moreover, by hypothesis the sequence $x_k$ is converging to $x\in W^{1,1}_{\loc}([0,\infty),\R^n)$ strongly in $L^\infty_{\loc}([0,\infty),
\R^n)$, i.e., uniformly on compact sets. 
To prove the claim we have to show that  $x=x^{u,x_0}$. This can be done by showing that our assumptions  allow to pass to the limit in the state equation in the weak sense of distributions, that is
\begin{equation*}
\int_0^\infty \hspace{-0.25cm}\varphi(t)\cdot  x'(t)\,dt=\int_0^\infty \varphi(t)\cdot\Big( a(t,x(t)) +b(t,x(t))u(t)\Big)\,dt
\end{equation*}
 for any test function $\varphi\in {\mathcal D}((0,\infty),\R^n)$.
Nevertheless, such equality  can be easily proved by arguing as in the proof  of~\cite[{\em1.}\ of Lemma 5.3]{DellaFreddi25}, and we omit the details. 
Then, $x$ satisfies the limit state equation. Since  $x_k\to x$ uniformly on compact sets and $x_k(0)=x_0$ for all $k\in \N$ then $x(0)=x_0$. Hence $x$ is a solution of the limit Cauchy problem~\eqref{eq:CauchyProblem} that, on the other hand,  admits a unique solution.  Thus we have $x=x^{u,x_0}$ in $[0,\infty)$, proving that $(u,x)\in \cA_\infty$.
\end{proof}

We are now in position to state the main result of this subsection. It is worth noting that, usually, $\Gamma$-convergence results are established independently of compactness properties (like coercivity), and are then combined to guarantee the convergence of minimizers. In our case, however, the dissipativity assumption with a coercive storage function induces a compactness property of the states, which also plays a role in  establishing the liminf inequality in the proof of $\Gamma$-convergence. This constitutes a rather substantial difference compared to the more classical setting of \cite{DellaFreddi25}.

\begin{prop}\label{prop:Gamma-Convergence}
Let us consider the class of optimal control problems in~\eqref{eq:RiscritturaProblem}. Under the assumptions of Theorem \ref{thm:Main}, for any initial condition $x_0\in \R^n$ and any sequence $0\leq T_k\to \infty$, it holds that 
\begin{equation}
\label{eq:GammaF_T}
\cF_\infty=\Gamma^-_{\rm seq}(\cU\times \cX)\lim_{k\to \infty}\cF_{T_k}.
\end{equation}
\end{prop}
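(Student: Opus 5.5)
I would prove the two halves of sequential $\Gamma$-convergence separately: the liminf inequality and the existence of recovery sequences (limsup inequality), as recalled in the Appendix.

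\textbf{Liminf inequality.} Let $(u_k,x_k)\to(u,x)$ in $\cU\times\cX$. We must show $\cF_\infty(u,x)\le\liminf_k\cF_{T_k}(u_k,x_k)$. We may assume the liminf is finite, and we pass to a subsequence that realizes it with $\cF_{T_k}(u_k,x_k)\le C$. Then $(u_k,x_k)\in\cA_{T_k}$, and Lemma~\ref{lemma:GammaCOnvergenceofSolutions} gives $(u,x)\in\cA_\infty$.

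By Proposition~\ref{prop:EquiCoercivity}, item~\ref{it:boundbound}, the $x_k$ lie in a fixed compact set $H$. Since $\ell+\gamma\ge0$, for every fixed $T$ and $k$ large we have
\[
\int_0^{T}(\ell+\gamma)(t,x_k,u_k)\,dt\le \int_0^{T_k}\ell(t,x_k,u_k)\,dt+\|\gamma\|_1 .
\]
On $[0,T]$ we have $x_k\to x$ uniformly and $u_k\rightharpoonup u$ weakly (weakly$^\star$) in $L^p\subset L^1(0,T)$. The integrand $\ell+\gamma$ is normal, convex in $u$ and nonnegative, so the classical Ioffe-type lower semicontinuity theorem (e.g.\ \cite{Buttazzo89}) gives
\[
\int_0^T(\ell+\gamma)(t,x,u)\,dt\le\liminf_k\cF_{T_k}(u_k,x_k)+\|\gamma\|_1 .
\]
Letting $T\to\infty$ by monotone convergence and subtracting $\|\gamma\|_1$ (which is finite) yields the claim. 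I note that the separation (a) is not needed here, only the joint convexity in $u$ of $\ell=\ell_1+\ell_2$.

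\textbf{Limsup inequality.} Given $(u,x)$ with $\cF_\infty(u,x)<\infty$, so that $x=x^{u,x_0}$ globally, I take the recovery sequence from~\eqref{eq:TailReplacement} with $T=T_k$:
\[
u_k=u\ \text{on }[0,T_k],\qquad u_k=u_f(\cdot,x_k)\ \text{on }(T_k,\infty),
\]
with $x_k=x$ on $[0,T_k]$ followed by the exponential tail. Then $(u_k,x_k)\in\cA_{T_k}$, and $x_k\to x$ uniformly on compact sets because $x_k=x$ on $[0,T_k]$ with $T_k\to\infty$.

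For the costs,
\[
\cF_{T_k}(u_k,x_k)=\int_0^{T_k}\ell(t,x,u)\,dt\to\int_0^\infty\ell(t,x,u)\,dt .
\]
This holds because $\ell^-\le\gamma\in L^1$ and $\ell(\cdot,x,u)$ has a finite integral, so dominated/monotone convergence applies to $\ell^{\pm}$. This gives equality in the limit.

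\textbf{Main obstacle.} The delicate point is weak$^\star$ convergence $u_k\to u$ in $\cU$, which requires $\sup_k\|u_k\|_p<\infty$, i.e.\ a uniform $L^p$ bound on the tails $u_f(\cdot,x_k(\cdot))$ on $(T_k,\infty)$. Here dissipativity with coercive $S$ is used:
\[
S(x(t))\le S(x_0)+\cF_\infty(u,x)+\|\gamma\|_1\quad\text{for all } t,
\]
so $x$ is bounded and lies in a compact $K$. The tails $x_k(T_k)e^{-(t-T_k)}$ then stay in the compact set $H=\text{conv}\{K,-K\}$. Hypothesis (b1) gives $|u_k|\le\alpha_H\mathbf{1}_{(T_k,\infty)}$ on the tail, whose $L^p$ norm is uniformly bounded for $p<\infty$, and in fact tends to $0$.

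With this bound, for $\varphi\in L^{p'}$ we have $\int\varphi(u_k-u)=\int_{T_k}^\infty\varphi(u_k-u)\to0$ by Hölder, using the uniform bound together with $\|\varphi\|_{L^{p'}(T_k,\infty)}\to0$ when $p'<\infty$. When $p=\infty$, so $p'=1$, the same argument works since $\|\varphi\|_{L^1(T_k,\infty)}\to0$. This is exactly why (b1) is formulated with $\alpha_K\in L^p$ and why coercivity of $S$ enters the $\Gamma$-convergence proof.

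Assumption (b2) is then only needed to confirm that the tails are admissible with finite cost. It is not used in the $\Gamma$-limit itself, since $\cJ_{T_k}$ ignores $(T_k,\infty)$.
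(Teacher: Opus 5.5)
Your proof is correct. The recovery sequence is the same one the paper uses, but your liminf argument takes a genuinely different and leaner route.

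\textbf{Liminf inequality.} The paper first reduces to $\gamma=0$ and splits $\ell=\ell_1+\ell_2$. It handles $\ell_1$ by Fatou plus pointwise lower semicontinuity. For $\ell_2$ it combines the uniform compactness of the states from Proposition~\ref{prop:EquiCoercivity} with (b2) to show that the tails $\int_{T_k}^\infty\ell_2(t,x_k,u_k)\,dt$ vanish. It can then compare with $\cI_\infty$, which is lower semicontinuous as a supremum of the $\cI_T$.

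You instead fix a horizon $T$ and use $\ell+\gamma\ge0$ to bound $\int_0^T(\ell+\gamma)(t,x_k,u_k)\,dt$ by $\cF_{T_k}(u_k,x_k)+\|\gamma\|_1$ once $T_k\ge T$. You then apply the De Giorgi--Ioffe theorem to the whole integrand on $[0,T]$ and let $T\to\infty$. This is legitimate: $\ell_1+\ell_2+\gamma$ is a nonnegative normal integrand convex in $u$, and weak$^\star$ convergence in $L^p(0,\infty)$ gives weak convergence in $L^1(0,T)$.

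What your route buys is that the liminf inequality needs neither (b2) nor dissipativity. The tail never has to be controlled, because each finite-horizon cost dominates every truncation. The compactness of the $x_k$ that you quote from Proposition~\ref{prop:EquiCoercivity} is in fact unused in your argument.

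\textbf{Recovery sequence.} Here you make explicit what the paper only asserts, namely that $(u_k)$ is bounded ``thanks to assumption (b1)''. A uniform $L^p$ bound on the tails $u_f(\cdot,x_k(\cdot))$ requires $\sup_{t\ge0}|x(t)|<\infty$. This is exactly what dissipativity with coercive $S$, together with $\cF_\infty(u,x)<\infty$, provides. You also verify weak$^\star$ convergence directly by testing against $\varphi\in L^{p'}$. The paper instead uses a subsequence, identification of the limit and metrizability on bounded sets.

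Consequently, your version places coercive dissipativity where it is genuinely needed, in the recovery step. The paper places it in the liminf step, where your argument shows it is dispensable.

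One small inaccuracy in your closing remark: (b2) is not needed for admissibility of the tails either. The set $\cA_{T}$ imposes no cost condition on $(T,\infty)$, so in your proof (b2) plays no role at all.
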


\begin{proof}
Let us consider a sequence $0\le T_k\to \infty$. First of all, let us observe that it is not restrictive to assume that the integrands are non-negative. Indeed, it is immediately seen that the function $S$ is a coercive storage function also for the non-negative supply function 
$$
\widetilde\ell(t,x,u)=\ell(t,x,u)+\gamma(t).
$$ 
Moreover, by denoting with $\widetilde\cF_T$ the corresponding joint functionals with $\widetilde\ell$ in place of $\ell$,
we clearly have 
$$
\cF_T(u,x)=\widetilde\cF_T(u,x)-\int_0^T\gamma(t)\,dt
$$
and $$\lim_{k\to\infty}\int_0^{T_k}\gamma(t)\,dt=
\int_0^\infty\gamma(t)\,dt.
$$
Thus, showing that
\[
\G_{\rm seq}^-(\cU\times\cX)\lim_{h\to\infty}\widetilde\cF_{T_h}=\widetilde\cF_\infty.
\]
will immediately imply \eqref{eq:GammaF_T}. 
So, restricting without loss of generality to the case $\gamma=0$ (non-negative integrands), by Remark~\ref{rem_lirs} in Appendix, we have to prove the following:
\begin{enumerate}
\item (\emph{liminf inequality}) for all sequences $(u_k,x_k)\to (u,x)$ we have $\dis \cF_{\infty}(u,x)\leq \liminf_{k\to \infty} \cF_{T_k}(u_k,x_k)$;
\item (\emph{recovery sequence}) for every $(u,x)\in \cU\times \cX$ there exists a sequence $(u_k,x_k)\to (u,x)$ such that  $\dis \cF_\infty(u,x)\geq  \limsup_{k\to \infty} \cF_{T_k}(u_k,x_k)$.
\end{enumerate}

Let us start by proving the \emph{liminf} inequality.
Let us consider $(u_k,x_k)\to (u,x)\in \cU\times \cX$
and  suppose that 
\[
\liminf_{k\to \infty}\cF_{T_k}(u_k,x_k)<\infty,
\]
otherwise there is nothing to prove.
Thus, possibly passing to a subsequence, we have that there exists $C\geq 0$ such that 
$\cF_{T_k}(u_k,x_k)\leq C$. In particular, since $\cF_{T_k}=\cJ_{T_k}+\chi_{\cA_{T_k}}$ we have that 
\[
(u_k,x_k)\in \cA_{T_k}\;\;\;\;\forall \;k\in \N.
\]
By  our assumptions,~Lemma~\ref{lemma:GammaCOnvergenceofSolutions}
implies that $(u,x)\in \cA_\infty$. Summarizing, we have 
\begin{equation}\label{eq:IdicatrixOk}
\chi_{\cA_\infty}(u,x)=\chi_{\cA_{T_k}}(u_k,x_k)=0,\;\;\;\;\forall \;k\in \N.
\end{equation}
Let us now focus on the  cost functional. Let us recall that we have $\ell(t,x,u)=\ell_1(t,x)+\ell_2(t,x,u)$, where $\ell_1$ is a non-negative normal integrand, while $\ell_2$ is a non-negative normal convex integrand. Thus, for any $T\in (0,\infty]$ we define the functionals  $\cH_T,\cI_T:\cU\times\cX\to [0,+\infty]$ by \vspace{-2ex} 
\begin{equation*}
\hspace{-10ex}\begin{split}
    \cH_T(u,x):=\int_0^T \ell_1(t,x(t))\,dt\;\;\;\; \text{ and }\;\;
\;\; \cI_T(u,x):=\int_0^T\ell_2(t,x(t),u(t))\,dt.
\end{split}
\end{equation*}
By Fatou's Lemma (since $\ell_1$ is nonnegative) we have
\begin{equation*}
    \begin{split}
        \liminf_{k\to \infty}\cH_{T_k}(u_k,x_k)&=\liminf_{k\to \infty}\int_{0}^\infty \ell_1(t,x_k(t))\mathbf{1}_{[0,T_k]}(t)\,dt \\ &\geq \int_{0}^\infty \liminf_{k\to\infty}\ell_1(t,x_k(t))\mathbf{1}_{[0,T_k]}(t)\,dt.
    \end{split}
\end{equation*}
Since $x_k\to x$ uniformly on compact intervals (thus pointwisely) and $\ell_1(t,\cdot)$ is lower semicontinuous, we have 
\[
\liminf_{k\to\infty}\ell_1(t,x_k(t))\mathbf{1}_{[0,T_k]}(t)\geq \ell_1(t,x(t))\text{ for a.a.}\ t\ge0.
\]
Thus, summarizing, we have proved that
\begin{equation}\label{eq:LimInfFUncFirstPart}
\liminf_{k\to \infty}\cH_{T_k}(u_k,x_k)\geq \int_0^\infty \ell_1(t,x(t))\,dt=\cH_\infty(u,x).
\end{equation}
Let us now consider the sequence $\cI_{T_h}$. 
By De Giorgi and Ioffe's Semicontinuity Theorem (see for instance \cite[Theorem 7.5]{FL07} or \cite[Section 2.3]{Buttazzo89}), for 
every $T\in(0,+\infty)$ the functional 
$\cI_T:  \cU\times\cX\to [0,+\infty]$ 
is  lower semicontinuous with respect to the chosen topologies, because  $\ell_2$ is a normal convex integrand. Moreover, also $ \cI_\infty$ is  lower semicontinuous. Indeed, since the integrand is non-negative we have 
\begin{equation*}
\int_0^{\infty}\ell_2\big(t, x(t),u(t)\big)\,dt=\sup_{T>0}\Big(\int_0^{T}\ell_2\big(t,x(t),u(t)\big)\,dt\Big),
\end{equation*}
 implying that $\cI_\infty$ is lower semicontinuous, as the supremum  of any collection of lower semicontinuous functionals is lower semicontinuous. 
 
Since  we have $\cJ_{T_k}(u_k,x_k)\leq C$ for all $k\in \N$,  by Proposition \ref{prop:EquiCoercivity} we also have that there exists a compact set $K$ such that $x_k(t)\in K$ for all $k\in \N$ and all $t\in [0,\infty)$. 
By the structure of the cost functional we thus have that 
\begin{equation*}
    \begin{split}
        0&\leq \liminf_{k\to \infty}\int_{T_k}^\infty \ell_2(t,x_k(t),u_k(t))\;dt \leq \limsup_{k\to \infty}\int_{T_k}^\infty \ell_2(t,x_k(t),u_g(t,x_k(t))\,dt\\ &\leq \lim_{k\to \infty}\int_{T_k}^\infty m_K(t)\;dt=0
    \end{split}
\end{equation*}
since $m_K\in L^1((0,\infty))$ (recall hypothesis (b) in Subsection~\ref{subsec:SequenceofContProb}). 
We have thus proved 
\begin{equation}\label{eq:CodeVanishing}
\lim_{k\to \infty}\int_{T_k}^\infty \ell_2(t,x_k(t),u_k(t))\;dt=0.
\end{equation}
By equation~\eqref{eq:CodeVanishing}  we have
\begin{equation*}\label{eq:LiminfCOsts}
\begin{aligned}
\liminf_{k\to \infty}\cI_{T_k}(u_k,x_k)&=\liminf_{k\to \infty}\int_0^{T_k}\ell_2(t,x_k(t),u_k(t))\,dt  +\lim_{k\to \infty}\int_{T_k}^\infty \ell_2(t,x_k(t),u_k(t))\;dt\\&=\liminf_{k\to \infty}\int_0^{\infty}\ell_2(t,x_k(t),u_k(t)) =\liminf_{k\to \infty}\cI_\infty(u_k,x_k)\geq \cI_\infty(u,x)
\end{aligned}
\end{equation*}
where the last inequality holds by lower semicontinuity of $\cI_\infty$.

This, together with~\eqref{eq:IdicatrixOk}-\eqref{eq:LimInfFUncFirstPart}, concludes the proof of the liminf inequality.

We now prove the existence of a recovery sequence, i.e., given $(u,x)\in \cU\times \cX$ we have to show that there exists a sequence $(u_k,x_k)\to (u,x)$ in $\cU\times\cX$ such that  $\dis \cF_\infty(u,x)\ge \limsup_{k\to \infty} \cF_k(u_k,x_k)$.   We can suppose that $\cF_\infty(u,x)<+\infty$ (otherwise there is nothing to prove). This implies that $(u,x)\in \cA_\infty$ i.e., $x=x^{u,x_0}$. For any $k\in \N$ let us define $(u_k,x_k)\in \cU\times \cX$ by
\[
x_k(t)=\begin{cases}x^{u,x_0}(t) \;\;\;&\text{if } t\in [0,T_k]\\
x(T_k)e^{-(t-T_k)}\;\;\;&\text{if } t\in (T_k,+\infty),\end{cases}
\]
and
\[
u_k(t)=\begin{cases}
u(t) \;\;\;&\text{if }t<T_k\\
u_f(t,x_k(t))\;\;\;&\text{if }t> T_k.
\end{cases}\
\]
This implies that $(u_k,x_k)\in \cA_{T_k}$ for all $k\in \N$ and moreover, that $(u_k,x_k)\to (u,x)$ in the considered topologies. Let us sketch the proof of such convergence. First of all we observe that the sequence $(x_k)$ is bounded in $\cX$, since every seminorm (i.e., the uniform norm on every compact subset of $[0,\infty)$) is bounded by the corresponding one of $x$. Also $(u_k)$ is bounded in $L^p(0,\infty)$, thanks to assumption (b1).
Then, every subsequence of $(u_k,x_k)$ admits a weakly converging further subsequence which can be shown to converge always to $(u,x)$ in the sense of distribution and, hence, in $\cU\times \cX$. Since the topology of the latter is metrizable on bounded sets, then we can conclude that the whole sequence converges as claimed. 

 Since $\ell$ is nonnegative  and $(u_k,x_k)=(u,x)$ on $(0,T_k)$, we also have
\[
\begin{aligned}
\limsup_{k\to \infty}\cF_{T_k}(u_k,x_k)&=\limsup_{k\to \infty}\int_0^{T_k}\ell(t,x_k(t),u_k(t))\;dt  =\limsup_{k\to \infty}\int_0^{T_k}\ell(t,x(t),u(t))\;dt\\&\leq \int_0^{\infty}\ell(t,x(t),u(t))\;dt=\cF_\infty(u,x).
\end{aligned}
\]
We have thus proved that $(u_k,x_k)$ is a recovery sequence, concluding the proof of $\displaystyle
\cF_\infty=\Gamma^{-}_{\rm seq}(\cU\times \cX)\lim_{k\to \infty}\cF_{T_k}$. 
\end{proof}

Finally, we can prove our main result Theorem~\ref{thm:Main}.

\begin{proof}[Proof of Theorem~\ref{thm:Main}]
The proof  follows by  Proposition~\ref{prop:EquiCoercivity} and Proposition~\ref{prop:Gamma-Convergence}, and by  applying  
the abstract Theorem~\ref{thm:PatternPreserving}  to conclude. 
\end{proof}

\section{Examples and Applications}\label{Sec:Examples}
In this section,   we present two elementary  examples illustrating our main results. In the first  we consider an optimal control problem (with  increasing horizon) for a dissipative state equation, that allows us to establish the pattern-preserving property and subsequently deduce the structure of the infinite-horizon optimal control (from the finite-horizon ones). In the second example, we examine a non-dissipative system (more precisely, with no coercive storage function) in which the pattern-preserving property still holds, thereby demonstrating that the sufficient conditions we proposed cannot be regarded as necessary. In Subsection~\ref{subsec:LQR} we explicitly state the consequence of Theorem~\ref{thm:Main} for a class of non-linear quadratic regulator optimal control problems.

\begin{example}[\emph{Dissipative system inducing a pattern-preserving problem}]\label{example:Dissipative1}
Given the final time $T\in (0,\infty]$,
we consider the (sequence of) optimal control problems on $\cU\times \cX$ with $\cU= L^\infty(0,\infty)$ and $\cX=\Wl^{1,1}([0,\infty))$ defined by
\begin{align*}
& J_T(u) = \int_0^T 4u(t)+|x(t)|+\chi_U(u(t)) \, dt, \\
\text{subject to}& \quad  x'(t) = (1-u(t)) x(t), \\ 
&\quad x(0)=1,
\end{align*}
with $U=[0,2]$.
 This problem  satisfies the 
assumptions of Theorem~\ref{thm:Main}, with $\ell_1(x):=|x|$, $\ell_2(u):=4u+\chi_U(u)$ and the feedback control strategy given by $u_f(t,x)=0$ for all $(t,x)\in (0,\infty)\times \R$. It remains to show that the system is dissipative w.r.t to $\ell(u,x)=4u+|x|+\chi_U(u)$ with a coercive storage function.
Let us define $S:\R\to \R$ by $S(x)=\ln(|x|+1)$ with is clearly non-negative and coercive.  
It can easily proved, simply verifying the conditions in Definition~\ref{defn:Dissipativity} (or the differential condition in~\eqref{eq:DifferentialCoercivity}), that $S$ is a storage function w.r.t. $\ell$ for the system under consideration. Then, all the hypothesis of Theorem~\ref{thm:Main} are satisfied  with $p=\infty$: the pattern preserving property is thus satisfied.\\
We now conclude the example by completely characterizing the optimal controls for the finite-horizon (via Pontryagin principle) and the infinite-horizon one (using the pattern-preserving property). 
First of all we note that, for any $u\in \cU$, the corresponding solution $x^{u,1}$
 is defined by
 \[
 x^{u,1}(t)=e^{\int_0^t (1-u(s))\;ds}
 \]
 and it is, thus, strictly positive.
We can thus consider the problems with the simplified running integral cost defined by $\wt \ell(x,u):=4u+x+\chi_U(u)$.
The Hamiltonian function $H:\R^4\to \R$ then reads
\[
\begin{aligned}
H(t,u,x,p)=(4u+x)+p(1-u)x+\chi_U(u)=x+px+u(4-px)+\chi_U(u).
\end{aligned}
\]
Let us fix $T>0$, and let us consider a triple $(u,x,p):[0,T]\to \R^3$ satisfying the Pontryagin principle conditions; in this case, $(u,x)$ is also known as a \emph{Pontryagin extremal}.
The costate $p:[0,T]\to \R$ satisfies
\[
\begin{cases}
p'(t)=-1-(1-u(t))p(t)\\
p(T)=0.
\end{cases}
\]
By the Weierstrass minimum condition, and introducing  the \emph{switching function} $\phi:[0,T]\to \R$ defined as
$
\phi(t):=4-p(x)x(t)$, we have that
\[
u(t)=\begin{cases}
2\;\;&\text{if }\phi(t)<0\\
0\;\;&\text{if }\phi(t)>0.\\
\end{cases}
\]
First of all, since $p(T)=0$, we have $\phi(T)=4>0$ and thus $u(T)=0$. Then, computing the derivative we obtain that
\begin{equation*}
    \begin{aligned}
        \phi'(t)=-x'(t)p(t)-p'(t)x(t) =-(1-u(t))x(t)p(t)+1+(1-u(t))p(t)x(t)=1.
    \end{aligned}
\end{equation*}
Then, the switching function is given by
$\phi(t)=t+(4-T)$
and then the unique switching point (if any) is given by the solution to equation
$
t+(4-T)=0
$, that is $t_s(T):=T-4$. Thus, the unique optimal control $u_T:[0,T]\to [0,2]$ for the problem on the time-horizon $[0,T]$ is (in an almost everywhere sense) defined  by
\[
u_T(t)=\begin{cases}
2\;\;&\text{if }t\in (0,T-4),\\
0\;\;&\text{if }t\in (T-4,T).
    \end{cases}
\]
Since $t_s(T)=T-4\to \infty$ as $T\to \infty$ implies that $u_T\to u_\infty\equiv 2$ (in the weak* sense in $L^\infty((0,\infty),[0,2]$), we can immediately conclude, via Theorem~\ref{thm:Main} that $u_\infty\equiv 2$ is an optimal control for the infinite-horizon problem.\hfill $\triangle$ 
\end{example}
In the following example we slightly modify the previous one, inducing a non-dissipative yet pattern-preserving sequence of problems.

\begin{example}[\emph{Non-dissipative but pattern-preserving problem}]\label{ex:NONDISSIpative}
Given the final time $T\in (0,\infty]$, the control and state spaces $\cU= L^\infty(0,\infty)$, $\cX=\Wl^{1,1}([0,\infty))$
we consider the (sequence of) optimal control problems defined by
\begin{align}
\min_{u \in \cU}   J_T(u) = &\int_0^T \left(\frac{|u(t)|}{3} + |x(t)|\right) e^{-2t}+\chi_U(u(t)) \, dt, \\
\text{subject to } \quad & x'(t) = (1-u(t)) x(t), \\ 
&x(0)=1,
\end{align}
with $U=[0,2]$.
Let us start by noting that the system cannot be dissipative w.r.t.\ the running cost integrand with a coercive function $S:\R\to \R$. Indeed,  otherwise,
by applying the constant control $u\equiv 0$ that  generates the solution  $x(t)= e^t$, we would have
\[
S(e^t)-S(1)\leq \int_0^t(e^s)e^{-2s}\,ds\leq\int_0^\infty e^{-s}\,ds=1
\]
for all $t\in [0,\infty)$, contradicting the coercivity of $S$.
On the other hand it can be formally seen that the corresponding family of optimal control problems is $T_k$-pattern preserving, for any $T_k\to \infty$. This follows from~\cite[Theorem 3.1] {DellaFreddi25}, since the equi-coercivity of the sequence  (recall Definition~\ref{defn:EquiCoercivityGeneral}) can be proved directly, without relaying on dissipativity, at the cost of considerably more tedious technical arguments. We sketch here the argument, without presenting all details for the sake of conciseness. First of all it is easy to see that any admissible control $u$ produces a nonnegative cost  bounded from above by
$\int_0^\infty (\frac{2}{3}+e^t)e^{-2t}dt=\frac{4}{3}$.
For any sequence $u_k\to u$ in $\cU$ with $\chi_U(u_k(t))=0$ for almost all $t\in (0,\infty)$ and for all $k\in \N$, the corresponding solutions 
\[
x^{u_k}(t)=e^{\int_0^t (1-u_k(s))\,ds}\;\;\;\;\forall t\in (0,\infty)
\]
converge, uniformly on compact intervals, to $x^{u}$. This can be achieved by applying the Arzelà–Ascoli theorem on any arbitrary compact interval 
$[0,T]$, since we have:
\[
\begin{aligned}
e^{-t}\leq x^{u_k}(t)\leq e^t \;\;\forall\,k\in \N,\;\forall\,t\in [0,\infty)\;\text{ and }\;
|(x^{u_k})'(t)|\leq e^t \;\;\forall\,k\in \N,\;\forall\,t\in [0,\infty),
\end{aligned}
\]
and the pointwise convergence trivially follows by the weak* convergence of the $u_k$. Thus, equi-coercivity w.r.t. $x$ is proved.

It can also be seen that the running cost integrand trivially satisfies all the hypotheses of~\cite[Theorem 3.1]{DellaFreddi25}. 
Thus, such family of optimal control problems is pattern-preserving  without being dissipative, somehow providing a ``boundary'' to our sufficient conditions provided in Theorem~\ref{thm:Main}.


 To complete our discussion, we apply  pattern-preservation to find an optimal control for the infinite-horizon problem arising from a preliminary   characterization the finite-horizon ones.

To this aim, let us identify all (candidate) optimal control-state pairs, for a given $T>0$ (finite), via Pontryagin principle. 
These candidates are referred to as Pontryagin extremals.
As in Example~\ref{example:Dissipative1}, since any solution is strictly positive and $U= [0,2]$, we may omit the absolute value signs in the running cost when formulating the conditions given by the Pontryagin principle.
The Hamiltonian $H:\R^4\to \R$ is thus defined by 
\begin{equation*}
    \begin{aligned}
        H(t,u,x,p) &= \left(\frac{u}{3} + x \right) e^{-2t} + p (1-u)x +\chi_U(u)
\\&= x e^{-2t} + p x + u \left( \frac{1}{3} e^{-2t} - p x \right)+\chi_U(u).
    \end{aligned}
\end{equation*}
Given $(u,x,p):[0,T]\to \R$ a control-state-costate triple satisfying the condition of Pontryagin principle, we have that the costate $p$ satisfies
\begin{equation}
\begin{cases}
p'(t) &=  -e^{-2t} - p(t)(1-u(t)), \\
 p(T) &=0.
\end{cases}
\end{equation}
By the Weirstrass condition, the optimal control satisfies:
\[
u(t) = 
\begin{cases}
2, & \text{if }\phi(t) < 0,\\
0, & \text{if }\phi(t) > 0,\\
\end{cases}
\]
where  the \emph{switching function} is defined by
$\phi(t) := \frac{1}{3} e^{-2t} - p(t)x(t)$.
\\
If $\phi(t) = 0$ we cannot infer the value of $u$. Let us now split the reasoning in various intermediate steps.\\
\emph{Step 1:  Characterizing singular arcs.}
Suppose  that there exists a non-trivial interval $I\subset [0,T]$ such that
$
\phi(t)=0$ for all  $t\in I$.
In such situation is common to say that in $I$ we have a \emph{singular arc}.
Computing the derivative of $\phi$ we have
\begin{equation}\label{eq:derivative}
\hspace{-0.05cm}\phi'(t)\hspace{-0.07cm}=\hspace{-0.07cm}-\tfrac{2}{3}e^{-2t}-p'(t)x(t)-p(t)x'(t)=e^{-2t} (x(t)-\tfrac{2}{3}).
\end{equation}
Thus, we must have $x(t)=\frac{2}{3}$ for all $t\in I$. This also implies that $x'(t)=0$ for all $t\in I$, which in turn, yields to $u(t)=1$  for all $t\in I$ (since $x$ cannot reach $0$ in finite time).
Moreover, substituting these values in $\phi(t)$ we obtain
\[
0=\frac{1}{3} e^{-2t} - p(t)x(t)=\frac{1}{3} e^{-2t} - p(t)\frac{2}{3}\;\;\;\;\;\forall \,t\in I,
\]
which implies
$p(t)=\frac{1}{2}e^{-2t}$ for all $t\in I$.
Note that this is compatible with the adjoint equation, since 
$p'(t)=-e^{-2t}$.
Summarizing, along a singular arc we have
\begin{equation}\label{eq:SingularArcCara}
x(t)=\frac{2}{3},\;\;\;u(t)=1,\;\;\;p(t)=\frac{1}{2}e^{-2t},\;\;\;\forall\,t\in I.
\end{equation}
\emph{Step 2: Structure of singular-arc candidates.}
Let us suppose that a singular arc exists, and call it $I=(t_1,t_2)$ with $t_1<t_2$. Since $x(0)=1$ and $p(T)=0$,  by~\eqref{eq:SingularArcCara} and the definition of  $\phi$ we necessarily have that $(t_1,t_2)\subset (0,T)$.
 Now, let us consider $I^+=(t_2,\tau)\subset [0,T]$ the maximal interval of this form in which $\phi$ does not change sign. 
Suppose first that $\phi>0$ in $I^+$. This implies that $u(t)=0$ and $x(t)=\frac{2}{3}e^{t-t_2}>\frac{2}{3}$ in $I^+$. This, by~\eqref{eq:derivative} implies that $\phi'>0$ in $I^+$. Thus, $\phi$ cannot change sign and then $\tau=T$.

Otherwise, if $\phi<0$ in $I^+$, this implies that $u(t)=2$ and $x(t)=\frac{2}{3}e^{-(t-t_2)}<\frac{2}{3}$ in $I^+$. This implies that $\phi$ is strictly decreasing in $I^+$ and thus cannot change sign in $I^+$ and again $\tau=T$. On the other hand this is not compatible with the fact that $\phi(T)>0$, ruling out this alternative.

Consider now $I^-=(\tau,t_1)\subset [0,t_1)$ the maximal interval in which $\phi$ does not change sing. 
Let us suppose that first $\phi<0$ in $I^-$; again, this implies that $u(t)=2$ and $x(t)=\frac{2}{3}e^{-(t-t_1)}>\frac{2}{3}$ in $I^-$. This implies that $\phi$ is strictly increasing in $I^-$, thus cannot change sign in $I^-$ and, hence, $\tau=0$. Note that this is compatible with $x(0)=1$.
If instead $\phi>0$ in $I^-$, then $u(t)=0$ and  $x(t)=\frac{2}{3}e^{(t-t_1)}<\frac{2}{3}$ in $I^-$. Therefore,  $\phi$ is strictly decreasing in $I^-$ and, thus,  cannot change sign in $I^-$, implying $\tau=0$. On the other hand note that this is incompatible with $x(0)=1$, and also such possibility is ruled out.
Summarizing, if a singular arc exists, the solution is necessarily of the following form
\begin{equation}\label{eq:singularSolStructure}
u(t)=\begin{cases}
2&\text{if }t\in (0,t_1),\\
1& \text{if }t\in (t_1,t_2),\\
0& \text{if }t\in (t_2,T),
\end{cases}
\end{equation}
for some $0<t_1<t_2<T$.\\
\emph{Step 3: Non-singular candidates: at most $1$ switching point.} Suppose that $(x,u,p)$ satisfies the Pontryagin principle conditions and does not exhibit  singular arcs.
Suppose by contradiction that there are $2$ non-trivial switching points $0< t_1<t_2<T$.
In $(t_2,T]$ we have $\phi(t)>0$ (since $\phi(T)>0)$ and thus $u(t)=0$.
In $(t_1,t_2)$ we have $\phi(t)<0$ and thus $u(t)=2$.
Then, in $(t_1,t_2)$ $x$ is strictly decreasing,  because $x'$ is strictly negative. Since $\phi$ is $\cC^1$ in $[t_1,t_2]$  (by continuity of $x$ and~\eqref{eq:derivative}) and takes the same value at the endpoints, by Rolle's theorem  there exists $t_3\in (t_1,t_2)$ such that $\phi'(t_3)=0$, i.e.,  $x(t_3)=\frac{2}{3}$. But then, since $x$ is decreasing in $(t_1,t_2)$ we have that
\[
x(t)>\frac{2}{3} \;\;\;\text{in } (t_1,t_3)\;\;\;\text{ and }\;\;\;
x(t)<\frac{2}{3} \;\;\;\text{in } (t_3,t_2)
\]
and then $\phi'(t)>0$ if $t\in (t_1,t_3)$ and
$\phi'(t)<0$ if $t\in(t_3,t_2)$, in contradiction with the fact that $\phi(t_1)=\phi(t_2)=0$ and $\phi(t)<0$ in $(t_1,t_2)$.\\
\emph{Step 4: Structure of non-singular candidates.}
Suppose by contradiction that there are no switching points. This implies that $u(t)\equiv 0$ for all $t\in [0,T]$ (since we already noted that $\phi(T)>0$). 
Computing we have $\displaystyle J_T(u)=\int_0^T (0+e^t)e^{-2t}dt=1-e^{-T}
$
while for $v\equiv 2$ we have
$\displaystyle 
J_T(v)=\int_0^T(\frac{2}{3}+e^{-t})e^{-2t} dt=\frac{-e^{-3T}-e^{-2T}+2}{3}
$.
    Now, $J_T(v)-J_T(u)=\frac{-3+3e^{-T}-e^{-3T}-e^{-2T}+2}{3}<0$ for all $T>1$, contradicting the optimality of $u$.
Then any Pontryagin extremal with no singular arcs $u$ is necessarily of the form
\begin{equation}\label{eq:NonsingularSolStructure}
u(t)=\begin{cases}
2 &\text{if } t\in [0,t^T_s)\\
0 &\text{if } t\in (t^T_s,T)\\
\end{cases}
\end{equation}
for a certain $t_s^T\in (0,T)$, at least for $T>1$.\\
\emph{Fact 5: The switching times.}
Let us suppose that $T>1$ and consider first a non-singular solution $u$ of the form~\eqref{eq:NonsingularSolStructure}.
Since $u=0$ for $t \ge t_s$ we have
$x'(t) = x(t)$ which implies $x(t) = x(t_s) e^{t-t_s}$.
Moreover, since $p(T)=0$ we can compute $p(t)$ in such interval, obtaining
$
 p(t) = e^{-2t} - e^{-(T+t)}$.
Substituting in the switching condition $\phi(t_s)=0$ we have
\[
\frac{1}{3} e^{-2 t_s} = p(t_s)x(t_s) = \big(e^{-2 t_s} - e^{-(T+t_s)}\big) x(t_s).
\]
On $[0,t_s]$, $u=2 \implies x(t_s)=e^{-t_s}$, so
\[
\frac{1}{3} e^{-2 t_s} = \big(e^{-2 t_s} - e^{-(T+t_s)}\big) e^{-t_s} \implies e^{t_s} = 3 \big(1 - e^{-(T-t_s)}\big).
\]
and then
$t_s^T=\ln\left (\frac{3}{1+3e^{-T}}\right)$.\\
Summarizing, the closed form of non-singular Pontryagin extremal is 
\[
u_{ns,T}(t)=\begin{cases}
2\;\;&\text{if } t\in \left (0,\ln\left (\frac{3}{1+3e^{-T}}\right)\right)\\
0\;\;&\text{if } t\in \left (\ln\left (\frac{3}{1+3e^{-T}}\right), T\right).
\end{cases}
\]
Let us now characterize the times $t_1<t_2$ for the singular-arc candidate.
Since $x(t_1)=\frac{2}{3}$ (recall equation~\eqref{eq:SingularArcCara} in \emph{Step 1})  and $x(t)=e^{-t}$ in $[0, t_1)$  we have that 
$
\frac{2}{3}=e^{-t_1}$
and thus $t_1=\ln(\frac{3}{2})$.
For $t_2$ we have that $p(t)=\frac{1}{2}e^{-2t}$ in $(t_1,t_2)$ and $p(t)=e^{-2t}-e^{-(T+t)}$ in $(t_2,T)$ and then
$
\frac{1}{2}e^{-2t_2}=e^{-2t_2}-e^{-(T+t_2)}
$
which implies 
$
\frac{1}{2}e^{-2t_2}=e^{-T}e^{-t_2}
$
and thus
$
\frac{1}{2}=e^{-T}e^{t_2}.
$
In turn, this implies that $
t_2(T)=T-\ln(2)$.
Thus, the singular Pontryagin extremals are of the form 
\[
u_{s,T}=\begin{cases}
2\;\;\;&\text{if }t\in (0,\ln(\frac{3}{2}))\\
1\;\;\;&\text{if }t\in (\ln(\frac{3}{2})), T-\ln(2))\\
0\;\;\;&\text{if }t\in ( T-\ln(2),T).
\end{cases}
\]
\emph{Step 6: The finite-horizon optimal control.}
By numerically comparing the costs associated with $u_{s,T}$ and $u_{ns,T}$, it is observed that, for sufficiently large $T$, the control $u_{s,T}$ yields a lower cost and is therefore optimal. For brevity, the explicit computations are omitted. The optimality of $u_{s,T}$ for large $T$ can be further corroborated using the open-source toolbox for optimal control problems~\texttt{BOCOP} ( see~\cite{Bocop,BocopExamples}).
Figure~\ref{figure:Extremal} illustrates the control $u_{s,T}$ for $T=10$, providing a clear depiction of its qualitative behavior.
 \\
\emph{Step 7: The infinite-horizon optimal control.}
Let us find the limiting control: since $t_1\equiv \ln(\frac{3}{2})$   and $t_2(T)\to \infty$ as $T\to \infty$ 
we have that $u_{s,T} \to u_{s,\infty}$ in $\cU$ where \begin{equation}\label{eq:Infinite-HorizonControl}
u_{s, \infty}(t)=\begin{cases}
2\;\;&\text{if }t\in (0,\ln(\frac{3}{2})),\\
1\;\;&\text{if }t\in (\ln(\frac{3}{2}), \infty).
\end{cases}
\end{equation}
By applying the pattern-preserving property,
we can thus conclude that $u_{s, \infty}$ is an optimal control for the infinite-horizon problem.\hfill $\triangle$ 
\end{example}
Note that both these examples exhibit a typical \emph{turnpike behavior} (as introduced and discussed in~\cite{FaulGrune22, Faulwasser21,Gruene2022,Hara2023,Trelat23, Angeli2024}). Indeed, focusing on the second example for conciseness, the optimal finite-horizon solutions remain ``most of the time'' close to the equilibrium $\hat z = (\hat u, \hat x)$, defined by $\hat u := 1$ and $\hat x := \tfrac{2}{3}$, for all finite times $T$. A similar property holds for the infinite-horizon control, which appears to stabilize in finite time at the equilibrium $\hat z$ and subsequently remains there, see also Figure~\ref{figure:Extremal}.
By adapting the results in~\cite[Theorem 14]{Faulwasser21} for time-dependent running costs, this turnpike/convergence to $\hat z$ behavior in turn implies that the system satisfies a \emph{local} strict dissipativity condition in a (functional) neighborhood of this equilibrium $\hat z$. This is consistent with our analysis, which rules out the existence of a \emph{global} coercive storage function. We further emphasize that, in contrast to classical turnpike-based approaches, our framework does not require any a priori knowledge of such optimal ``operating point'' $\hat z$ in order to establish the pattern-preserving property. This thus allows us to conclude that infinite-horizon optimal controls can be obtained as limits of finite-horizon optimal controls without any further information on ``candidate equilibria''.
Summarizing, this simple example also highlights both the relation and the distinction between our sufficient conditions and those arising in dissipativity-based turnpike analysis~\cite{FaulGrune22, Faulwasser21,Gruene2022,Hara2023,Trelat23, Angeli2024}, which substantially focus on distinct behavior of infinite optimal control problems.
\begin{figure}[t!]\label{figure:Extremal}
\centering
\begin{tikzpicture}[scale=0.75]

\draw[->] (0,0) -- (10.5,0) node[right] {$t$};
\draw[->] (0,0) -- (0,2.5) node[above] {$u_{s,T}(t)$};

\draw[very thick, blue] (0,2) -- (0.4055,2);
\draw[very thick, blue] (0.4055,1) -- (9.3069,1);
\draw[very thick, blue] (9.3069,0) -- (10,0);

\draw[dashed,red] (0.4055,0) -- (0.4055,2);
\draw[dashed,red] (9.3069,0) -- (9.3069,1);

\node[below] at (0.4055,-0.1) {$\ln(3/2)$};
\node[below] at (8.9,-0.1) {$T-\ln 2$};
\node[below] at (10.2,-0.1) {$T$};
\draw (0.4055,0) -- (0.4055,-0.1);
\draw (9.3069,0) -- (9.3069,-0.1);
\draw (10,0.2) -- (10,-0.2);
\node[left] at (0,2) {$2$};
\node[left] at (0,1) {$1$};
\node[left] at (0,0) {$0$};

\end{tikzpicture}
\caption{Graphical representation of the singular-arc Pontryagin extremal $u_{s,T}$ in Example~\ref{ex:NONDISSIpative} for $T=10$. The \texttt{BOCOP} toolbox can be used to numerically observe that these extremals are optimal, at least for large 
$T$, and this can be formally confirmed by explicit computation of the cost. The infinite-horizon control $u_{s, \infty}$ is obtained as limit, when $T\to \infty$, of such piecewise-constant functions, and its formal definition is in~\eqref{eq:Infinite-HorizonControl}. }
\end{figure}
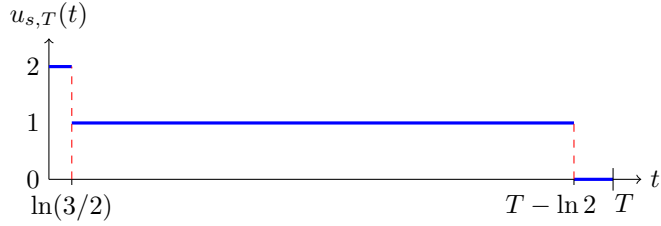

\subsection{(Non-)Linear Quadratic Regulator}\label{subsec:LQR}
A well-known setting in which the \emph{pattern-preserving property holds} is the classical case of LQR optimal control problems. In this setting a linear state-equation of the form $x'=Ax+Bu$ is coupled with a positive-definite quadratic running cost of the form $\ell(x,u)=x^\top Qx+u^\top R u$ for some matrices $Q\succeq 0$ and $R\succ 0$. Under stabilizability conditions, it can be proved that the sequence of finite-horizon optimal controls converges weakly in $L^2$ to an infinite-horizon optimal control.  This is illustrated for example in~\cite[Chapter 6]{Liberzon12} or~\cite[Chapter 8]{magni2014advanced}, in which also several references are provided.

In this short subsection we apply Theorem~\ref{thm:Main} to a possible non-linear and state-constraint extension of such problem, known as \emph{(non-linear)-quadratic regulator problems}, (n-QR).
Let us consider the problems, parameterized by $T\in (0,+\infty]$ defined as in~\eqref{eq:RiscritturaProblem} with a  running cost function $\ell:(0,\infty)\times \R^n\times \R^m\to [0,\infty]$  defined by
\[
\ell(t,x,u)=\ell_{QR}(t,x,u):=\ell_1(t,x)+u^\top Ru
\]
where $\ell_1:(0,\infty)\times \R^n\to [0,+\infty]$ is a nonnegative normal  integrand and $R\in \R^{m\times m}$, $R\succ 0$. 
Since $\ell_1$ is possibly infinite valued, it can incorporate a state-constraint of the form $x(t)\in X$ for  a closed $X\subset \R^n$, for example if $\ell_1$ is of the form $\ell_1(t,x)=x^\top Q x+\chi_X(x)$, with $Q\in \R^{n\times n}$, $Q\succeq 0$. The cost functional $\cJ_T:\cU\times \cX\to [0,\infty]$ thus reads
\[
\cJ_T(u,x)=\int_0^T \ell_1(t,x(t))+u^\top(t) Ru(t)\;dt.
\]
The natural choice of the control space is thus $\cU=L^2(\R_+,\R^m)$ with its weak$^*$ (equivalently, weak) topology, while the state space is set to be, as always, $\cX=\Wl^{1,1}([0,\infty),\R^n)$ equipped  with the topology of uniform convergence on compact intervals.
We define the admissible sets $\cA_T\subset \cU\times \cX$ via the autonomous control-affine differential Cauchy problem
\begin{equation}\label{eq:LQRproblemEquation}
\begin{cases}
x'(t)=a(t,x(t))+b(t,x(t))u(t)\\
x(0)=x_0,
\end{cases}
\end{equation}
where the functions $a$ and $b$ satisfies Assumption~\ref{assumpt:Basic0_L1} in the stronger form required by condition 1. of Theorem~\ref{thm:Main}.  

For $T\in (0,\infty]$, we denote the optimal control problem induced by the functional $\cF_{T}:=\cJ_T+\chi_{\cA_T}$ (with $\cA_T$ defined as in~\eqref{eq:AdmissibleSetA_T}) by the symbol $\LQR{T}{x_0}$, in which it is also explicit the dependence from the initial condition $x_0\in \R^n$.

We now show that under the dissipativity assumption, the pattern-preserving property directly applies to this class of problems, giving an approximation method to find infinite-horizon controls.
\begin{prop}\label{prop:nQRopt}
Suppose that $a$ and $b$ in~\eqref{eq:LQRproblemEquation} satisfies the hypothesis \emph{1.} of Theorem~\ref{thm:Main} and the system in~\eqref{eq:LQRproblemEquation} is dissipative with respect to $\ell_{QR}$ with a \emph{coercive} storage function.
Then, given any $x_0\in \R^n$ we have two alternatives:
\begin{enumerate}
    \item[(A)] $\LQR{\infty}{x_0}\equiv +\infty$; \emph{or}
\item[(B)] the sequence of problems $\LQR{T_k}{x_0}$ is pattern preserving. More specifically,  given any sequence $0<T_k\to \infty$, for any $k\in \N$ the problem $\LQR{T_k}{x_0}$ admits an optimal solution, and every sequence of optimal controls $u_{T_k}$ of $\LQR{T_k}{x_0}$ admits a subsequence converging (weakly in $L^2$) to an optimal control for the problem $\LQR{\infty}{x_0}$.
\end{enumerate}
\end{prop}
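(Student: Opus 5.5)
The argument splits according to whether the infinite-horizon value is finite. Fix $x_0$. If $\inf \cF_\infty=+\infty$ we are in alternative (A), so assume $\inf\cF_\infty<+\infty$.

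\emph{Step 1: the standing hypotheses hold.} We check assumptions (a), (b), (c) of Subsection~\ref{subsec:SequenceofContProb} with $p=2$. Take $\ell_2(t,x,u)=u^\top Ru$, which is a nonnegative normal convex integrand, and $\ell_1$ as given. For (b) take the greedy strategy $u_f\equiv 0$: then (b1) holds with $\alpha_K=0$ and (b2) with $m_K=0$. For (c) take $\gamma_i=0$. Theorem~\ref{thm:Main} then applies directly, so for every $0<T_k\to\infty$ the family is $(T_k)$-pattern preserving. Together with Propositions~\ref{prop:EquiCoercivity} and~\ref{prop:Gamma-Convergence}, we also have equi-coercivity w.r.t.\ $x$ and $\Gamma$-convergence.

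\emph{Step 2: existence of finite-horizon optima.} Fix $T=T_k$. Since $\ell\ge 0$, the recovery-sequence construction (truncation at $T$ with tail $u_f=0$, $x(T)e^{-(t-T)}$) gives $\inf\cF_T\le\inf\cF_\infty<\infty$. Take a minimizing sequence $(u_j,x_j)$ for $\cF_T$. The bound $\int_0^T u_j^\top R u_j\le C$ with $R\succ0$, and $u_j=0$ on $(T,\infty)$ by the definition of $\cA_T$, give boundedness of $u_j$ in $L^2$. Extract $u_j\to \bar u$ weakly. The argument of Proposition~\ref{prop:EquiCoercivity}, applied to the constant sequence of functionals $\cF_T$, yields via the coercive storage function a compact set containing all $x_j$, hence an $\Wl^{1,q}$ bound and, by Lemma~\ref{lem_ucbs}, a subsequence $x_j\to\bar x$ locally uniformly. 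Note that the dissipation estimate still uses the bound $C$ on the cost. We then pass to the limit in the constraint as in Lemma~\ref{lemma:GammaCOnvergenceofSolutions}, now on $[0,T]$: the limit $\bar x$ solves the ODE on $[0,T]$, exists on the closed interval because it is bounded, and has the exponential tail by uniform convergence. Restricting $\bar u$ on $(T,\infty)$ to $0$ gives $(\bar u,\bar x)\in\cA_T$. Lower semicontinuity of $\cJ_T$ follows from Fatou for $\ell_1$ and De Giorgi–Ioffe for $\ell_2$, as in the proof of Proposition~\ref{prop:Gamma-Convergence}, so $(\bar u,\bar x)$ is optimal.

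\emph{Step 3: compactness and conclusion.} Let $(u_{T_k},x_{T_k})$ be optimal. Since $\cF_{T_k}(u_{T_k},x_{T_k})=\inf\cF_{T_k}\le\inf\cF_\infty=:C$, we get $\|u_{T_k}\|_2^2\le C/\lambda_{\min}(R)$, using the zero tail. Hence a subsequence converges weakly in $L^2$ to some $u_\infty$. A sequence of optima is minimizing, and so is any subsequence of it, since the $\Gamma$-limit statement and Theorem~\ref{thm:PatternPreserving} hold for every sequence $T_k$, in particular for the extracted one. The concise formulation of Remark~\ref{rem:equivalentFormulation} applies because $\cF_T$ is $T$-future independent. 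Therefore $u_\infty$ is optimal for $\LQR{\infty}{x_0}$.

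The main obstacle is Step 2, namely existence for fixed finite $T$. One has to check carefully that the limit state exists on the closed interval $[0,T]$ and that the tail structure in $\cA_T$ is preserved under the limit. Both follow from the uniform bound provided by $S$. The fact that the optimal control may be taken equal to $u_f=0$ after $T$ makes the $L^2$ bound immediate.
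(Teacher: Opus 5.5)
Your proof is correct. Its skeleton is the paper's: you dispose of the case of infinite value and check (a)--(c) with $p=2$, $\ell_2(u)=u^\top Ru$, $u_f\equiv 0$, $\gamma_i=0$. You then bound the finite-horizon optima in $L^2$ via $\inf\cF_{T_k}\le\inf\cF_\infty$, the zero tail forced by $\cA_{T_k}$, and $R\succ 0$; you use the infimum where the paper uses the cost $M$ of one feasible $\bar u$, which makes no difference. Finally you extract a weakly convergent subsequence and apply Theorem~\ref{thm:Main} along the extracted horizons.

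The genuine difference is in how existence of finite-horizon optima is obtained. The paper simply invokes the external result \cite[Theorem 23.11]{Clarke13}. You instead run the direct method inside the paper's own framework:
\begin{enumerate}
\item the coercive storage function confines the trajectories of a minimizing sequence to a compact set, which is the argument of Proposition~\ref{prop:EquiCoercivity} at a fixed horizon;
\item this gives the $\Wl^{1,q}$ bound, and Lemma~\ref{lem_ucbs} yields local uniform convergence;
\item you pass to the limit in the state equation on $[0,T]$;
\item lower semicontinuity follows from Fatou and De Giorgi--Ioffe.
\end{enumerate}
The citation is shorter. Your route is self-contained: it does not require matching the possibly extended-valued $\ell_1$, the merely local Lipschitz data, and the tail structure of $\cA_T$ to the hypotheses of an external theorem. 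It also makes explicit that the dissipation estimate giving equi-coercivity is what prevents finite-time blow-up along minimizing sequences. That blow-up is the mechanism that breaks compactness in the example following Proposition~\ref{prop:EquiCoercivity}.

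One small correction in Step~3: you do not need $T$-future independence. Strictly, it fails here, because $\cA_T$ prescribes $u=u_f(\cdot,x(\cdot))$ on $(T,\infty)$. The implication you actually use is just the case $N=1$ of Definition~\ref{defn:PatternPreserving}, which holds with no extra assumption.
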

\begin{proof}
Given $x_0\in \R^n$, let us suppose that there exists at least a feasible control $\bar u\in L^2((0,\infty),\R^m)$ for  $\LQR{\infty}{x_0}$ otherwise we are in case \emph{(A)} and there is nothing to prove. This implies that
\begin{equation}\label{eq:FIniteCost}
\int_0^\infty \ell_1(t,x^{\bar u,x_0}(t))\, +\bar u^\top(t)R \bar u(t)\;dt=M
\end{equation}
for a certain $M\geq 0$.
This also implies that $\cA_T\neq \varnothing$ for all $T\in (0,\infty)$. Indeed,  since the couple $(\bar u,x^{\bar u,x_0})\in \cA_\infty$, after the already performed tail-replacing as in~\eqref{eq:TailReplacement} with $u_f\equiv0$, it generates an element of $\cA_T$. This implies that (at least) an optimal solution exists for $\LQR{T}{x_0}$, for any $T\in (0,\infty)$, see for example~\cite[Theorem 23.11]{Clarke13}.

Let us consider sequences $0\leq T_k\to \infty$ and $u_k\in L^2((0,\infty),\R^m)$, where $u_k$ is an optimal control for the problem $\LQR{T_k}{x_0}$ (with  the aforementioned tail-replacement $u(t)=0$ for a.a. $t\geq T_k$).
Since $R\succ 0$, there exists $a>0$ such that $I_m\preceq aR$.
We then have
\[
\begin{aligned}
\|u_k\|^2_2&=\int_0^\infty u^2_k(t)\,dt=\int_0^{T_k}u^2_k(t)\,dt \leq a\int_0^{T_k}u^\top_k(t)Ru_k(t)\,dt
\\&\leq a\int_0^{T_k} \ell_1(t,x^{u_k,x_0}(t))+u^\top_k(t)Ru_k(t)\,dt.
\end{aligned}
\]
Since any $u_k$ is an optimal control for $\LQR{T_k}{x_0}$ we also have
\[
\begin{aligned}
\|u_k\|^2_2&\leq a\int_0^{T_k} \ell_1(t,x^{\bar u,x_0}(t))+\bar u^\top(t)R\bar u(t)\,dt \\&\leq a\int_0^{\infty} \ell_1(t,x^{\bar u,x_0}(t))+\bar u^\top(t)R\bar u(t)\,dt\leq aM
\end{aligned}
\]
recalling equation~\eqref{eq:FIniteCost}.
This implies that $u_k$ is bounded in $L^2((0,\infty),\R^m)$ and thus it admits a weakly converging subsequence, let us say $u_{k_h}\to u_\infty$, for a certain $u_\infty\in L^2((0,\infty),\R^m)$.
Now, it is easy to see that all the hypothesis of Theorem~\ref{thm:Main} are satisfied: the system is dissipative by hypothesis, while the other conditions hold by defining $\ell_2(u)=u^\top R u$, and taking $u_f=0$.  We can thus conclude that $\LQR{T_k}{x_0}$ is a pattern-preserving sequence of problems and then  $u_\infty$ is an optimal control for  $\LQR{\infty}{x_0}$, concluding the proof.
\end{proof}
Let us note that, in this case, the differential characterization of dissipativity provided in~\eqref{eq:DifferentialCoercivity} turns out to be
\[
\begin{aligned}
    \inp{\nabla S(x)}{a(t,x)+b(t,x)u}\leq \ell_1(t,x)+u^\top R u,\;\; \\ \forall (t,x,u)\in \R_+\times \R^n\times \R^m\;\;\text{such that }\ell_1(t,x)<\infty. 
\end{aligned}
\]
If $a,b$ are time-independent polynomial, $\ell_1(t,x)=q(x)+\chi_X(x)$, with $q$ a polynomial and $X\subseteq \R^n$ a closed set, this differential inequality can be solved via SOS or semidefinite optimization techniques (see for example~\cite{caltechthesis}). If with such numerical methods we are able to find a coercive (SOS) polynomial function $S:\R^n \to \R$ satisfying such inequality we already have a certificate for pattern-preserving property.

\section{Conclusions}\label{sec:Conclusions}
In this work, we developed a variational perspective to compare finite-horizon and infinite-horizon optimal control problems in the presence of dissipative dynamics. Our main result showed that dissipativity, complemented by a coercive storage function, guarantees that the asymptotic behavior of optimal controls is well captured by the sequence of finite-horizon solutions, so that the infinite-horizon optimal control emerges as their limit. In this sense, dissipativity provides a structural mechanism ensuring the persistence of qualitative features of optimal controls as the time horizon grows.

The proposed framework connects classical dissipativity theory with tools of variational convergence (notably, $\Gamma$-convergence), offering a unified viewpoint that is both theoretically robust and practically verifiable. The validity and sharpness of the sufficient conditions were demonstrated through several numerical examples, which also clarify the boundaries of the approach.

A natural direction for future research is to further explore the role of (strict) dissipativity in infinite-horizon optimal control problems, further exploiting the variational framework introduced here and to extend the class of systems for which pattern preservation can be guaranteed.

\

\noindent
{\bf Acknowledgements.}  
M.D.R.\ and L.F.\ are members of GNAMPA--INdAM. T.A.L. was supported by the CNPq (Conselho Nacional de Desenvolvimento Científico e Tecnológico) grant number 443674/2024-8.

\bibliography{bibnol}  

\begin{thebibliography}{10}

\bibitem{Halkin74}
H.~Halkin, ``Necessary conditions for optimal control problems with infinite
  horizons,'' {\em Econometrica}, vol.~42, no.~2, pp.~267--272, 1974.

\bibitem{BasCassFra18}
V.~Basco, P.~Cannarsa, and H.~Frankowska, ``Necessary conditions for infinite
  horizon optimal control problems with state constraints,'' {\em Math. Control
  Relat. Fields}, vol.~8, no.~3-4, pp.~535--555, 2018.

\bibitem{CANNFra18}
P.~Cannarsa and H.~Frankowska, ``Value function, relaxation, and transversality
  conditions in infinite horizon optimal control,'' {\em J. Math. Anal. Appl.},
  vol.~457, no.~2, pp.~1188--1217, 2018.

\bibitem{Faulwasser21}
T.~Faulwasser and C.~M. Kellett, ``On continuous-time infinite horizon optimal
  control-{Dissipativity}, stability, and transversality,'' {\em Automatica},
  vol.~134, p.~109907, 2021.

\bibitem{Vinter2010Book}
R.~B. Vinter, {\em Optimal Control}.
\newblock Modern Birkh{\"a}user Classics, Birkh{\"a}user Boston, 2010.

\bibitem{Clarke13}
F.~Clarke, {\em Functional Analysis, Calculus of Variations and Optimal
  Control}.
\newblock Graduate Texts in Mathematics, Springer London, 2013.

\bibitem{Liberzon12}
D.~Liberzon, {\em Calculus of Variations and Optimal Control Theory: A Concise
  Introduction}.
\newblock Princeton, NJ, USA: Princeton University Press, 2012.

\bibitem{DellaFreddi25}
M.~{Della Rossa} and L.~Freddi, ``Pattern-preserving optimal control problems
  with increasing time horizon,'' {\em ESAIM: COCV}, vol.~31, p.~102, 2025.

\bibitem{DalMaso93}
G.~Dal~Maso, {\em An introduction to {$\Gamma$}-convergence}, vol.~8 of {\em
  Progress in Nonlinear Differential Equations and their Applications}.
\newblock Birkh\"auser Boston, Inc., Boston, MA, 1993.

\bibitem{Braides02}
A.~Braides, {\em $\Gamma$-Convergence for Beginners}.
\newblock Oxford University Press, 2002.

\bibitem{ButtDalMas82}
G.~Buttazzo and G.~Dal~Maso, ``{$\Gamma $}-convergence and optimal control
  problems,'' {\em J. Optim. Theory Appl.}, vol.~38, no.~3, pp.~385--407, 1982.

\bibitem{BelButFre93}
M.~Belloni, G.~Buttazzo, and L.~Freddi, ``Completion by {Gamma}-convergence for
  optimal control problems,'' {\em Ann. Fac. Sci. Toulouse Math. (6)}, vol.~2,
  no.~2, pp.~149--162, 1993.

\bibitem{Willems73}
J.~C. Willems, ``Dissipative dynamical systems part {I}: General theory,'' {\em
  Archive for Rational Mechanics and Analysis}, vol.~45, no.~5, pp.~321--351,
  1972.

\bibitem{Willems1972b}
J.~C. Willems, ``Dissipative dynamical systems part {II}: Linear systems with
  quadratic supply rates,'' {\em Archive for Rational Mechanics and Analysis},
  vol.~45, no.~4, pp.~352--393, 1972.

\bibitem{BorMash06}
B.~Brogliato, B.~Maschke, R.~Lozano, and O.~Egeland, {\em Dissipative Systems
  Analysis and Control: Theory and Applications}.
\newblock Communications and Control Engineering, Springer, 3rd~ed., 2020.

\bibitem{FaulKorda17}
T.~Faulwasser, M.~Korda, C.~N. Jones, and D.~Bonvin, ``On turnpike and
  dissipativity properties of continuous-time optimal control problems,'' {\em
  Automatica}, vol.~81, pp.~297--304, 2017.

\bibitem{Gruene2022}
L.~Gr{\"u}ne, ``Dissipativity and optimal control: Examining the turnpike
  phenomenon,'' {\em IEEE Control Systems Magazine}, vol.~42, no.~2,
  pp.~74--87, 2022.

\bibitem{Hara2023}
K.~Hara, M.~Inoue, and N.~Sebe, ``Dissipativity-constrained learning of {MPC}
  with guaranteeing closed-loop stability,'' {\em Automatica}, vol.~157,
  p.~111271, 2023.

\bibitem{Trelat23}
E.~Trelat, ``Linear turnpike theorem,'' {\em Mathematics of Control, Signals,
  and Systems}, vol.~35, 2023.

\bibitem{Angeli2024}
D.~Angeli and L.~Gr{\"u}ne, ``Dissipativity in infinite horizon optimal control
  and dynamic programming,'' {\em Applied Mathematics \& Optimization},
  vol.~89, 2024.

\bibitem{FaulGrune22}
T.~Faulwasser and L.~Gr\"une, ``Chapter 11 {-} {Turnpike} properties in optimal
  control: An overview of discrete-time and continuous-time results,'' in {\em
  Numerical Control: Part A} (E.~Trélat and E.~Zuazua, eds.), vol.~23 of {\em
  Handbook of Numerical Analysis}, pp.~367--400, Elsevier, 2022.

\bibitem{Buttazzo89}
G.~Buttazzo, {\em Semicontinuity, Relaxation and Integral Representation in the
  Calculus of Variations}, vol.~207 of {\em Pitman Research Notes in
  Mathematics Series}.
\newblock Longman Scientific \& Technical, Harlow; copublished in the US with
  John Wiley \& Sons, Inc., New York, 1989.

\bibitem{Evans24}
L.~Evans, ``An introduction to mathematical optimal control theory,'' 2024.
\newblock Lecture Notes. Available at:
  \url{https://math.berkeley.edu/~evans/control.course.pdf}.

\bibitem{Hale}
J.~K. Hale, {\em Ordinary Differential Equations}.
\newblock Dover Publications, 1997.

\bibitem{VanDerSchaft17}
A.~{van der Schaft}, {\em L2-Gain and Passivity Techniques in Nonlinear
  Control}.
\newblock Communications and Control Engineering, Springer, 3rd~ed., 2017.

\bibitem{BEFB:94}
S.~Boyd, L.~El{ }Ghaoui, E.~Feron, and V.~Balakrishnan, {\em Linear Matrix
  Inequalities in System and Control Theory}.
\newblock Philadelphia, PA: SIAM Studies in Applied Mathematics, 1994.

\bibitem{sostools}
A.~Papachristodoulou, J.~Anderson, G.~Valmorbida, S.~Prajna, P.~Seiler, P.~A.
  Parrilo, M.~M. Peet, and D.~Jagt, {\em {SOSTOOLS}: Sum of squares
  optimization toolbox for {MATLAB}}.
\newblock \texttt{http://arxiv.org/abs/1310.4716}, 2021.

\bibitem{caltechthesis}
P.~A. Parrilo, {\em {Structured} semidefinite programs and semialgebraic
  geometry methods in robustness and optimization}.
\newblock California Institute of Technology, 2000.

\bibitem{Arcak2016}
M.~Arcak, C.~Meissen, and A.~Packard, {\em Networks of Dissipative Systems}.
\newblock Springer International Publishing, 2016.

\bibitem{FL07}
I.~Fonseca and G.~Leoni, {\em Modern Methods in the Calculus of Variations:
  {$L^p$} Spaces}.
\newblock Springer Monographs in Mathematics, Springer, New York, 2007.

\bibitem{Bocop}
I.~S. Team~Commands, ``Bocop: an open source toolbox for optimal control.''
  \url{http://bocop.org}, 2017.

\bibitem{BocopExamples}
F.~J. Bonnans, D.~Giorgi, V.~Grelard, B.~Heymann, S.~Maindrault, P.~Martinon,
  O.~Tissot, and J.~Liu, ``{Bocop – A collection of examples},'' tech. rep.,
  INRIA, 2017.

\bibitem{magni2014advanced}
L.~Magni and R.~Scattolini, {\em Advanced and Multivariable Control}.
\newblock Pitagora, 2014.

\bibitem{AS1976}
A.~Ambrosetti and C.~Sbordone, ``{$\Gamma \sp{-}$}-convergenza e
  {$G$}-convergenza per problemi non lineari di tipo ellittico,'' {\em Boll.
  Un. Mat. Ital. A (5)}, vol.~13, no.~2, pp.~352--362, 1976.

\end{thebibliography}
\bibliographystyle{ieeetr}
\newpage
    \appendix
\section{$\Gamma$-convergence}\label{sec:Appendix}

In this short appendix we recall in a condensed form the definition and an useful sufficient condition for sequential $\Gamma$-convergence for sequences of functionals defined on a topological space $\cU\times \cX$.
\\
Since the pioneering work of Buttazzo and Dal Maso  (see~\cite{DalMaso93, ButtDalMas82}), an 
 appropriate notion of variational convergence  is provided by  sequential $\Gamma$-limits, whose definitions for a general sequence $\cF_k:\cU\times \cX\to \overline \R$ are
$$
\begin{array}{l}
\hspace{-0.18cm}\displaystyle\G^-_\seq(\cU\hspace{-0.07cm}\times\hspace{-0.07cm}\cX)\liminf_\koo \cF_k(u,x)\hspace{-0.07cm}:=\hspace{-0.07cm}\inf_{u_k\to u}\inf_{x_k\to x}\liminf_\koo
\cF_k(u_k,x_k),\\
\hspace{-0.18cm}\displaystyle\G^-_\seq(\cU\hspace{-0.07cm}\times\hspace{-0.07cm}\cX)\limsup_\koo \cF_k(u,x)\hspace{-0.07cm}:=\hspace{-0.15cm}\inf_{u_k\to u}\inf_{x_k\to x}\limsup_\koo
\cF_k(u_k,x_k).
\end{array}
$$  When they coincide, we denote their common value by writing
$$
\G_\seq^-(\cU\times\cX)\lim_\koo \cF_k.
$$
 \begin{remark}\label{rem_lirs}
It is easy to check that a sufficient conditions implying $\G^-_\seq(\cU\times\cX)$-convergence of $\cF_k$ to $\cF$ is given by 
\begin{enumerate}
\item (\emph{liminf inequality}) for all sequences $(u_k,x_k)\to (u,x)$ we have $\dis \cF(u,x)\leq \liminf_{k\to \infty} \cF_k(u_k,x_k)$;
\item (\emph{recovery sequence}) there exists a sequence $(u_k,x_k)\to (u,x)$ such that  $\dis \cF(u,x)\geq \limsup_{k\to \infty} \cF_k(u_k,x_k)$.
\end{enumerate}
Clearly 1.\ is also necessary.  On the other hand, the necessity of 2., that is the equivalence (between 1.-2.\ and $\G_\seq^-$-convergence as well as between topological $\G$-convergence  and $\G_\seq$ convergence) 
holds in first countable spaces (see \cite[Proposition 8.1]{DalMaso93} or for equi-coercive sequences in spaces in which relatively compact sets are metrizable like, e.g., Banach spaces with a separable dual space, or dual of a separable Banach space, with the weak, respectively weak*, topology (see  \cite{AS1976} or \cite{DalMaso93}). In the setting considered in this manuscript, this is indeed our case, therefore conditions 1.\  and 2.\  are both necessary and sufficient for $\G_\seq$-convergence.
\end{remark}

\end{document}